\newtheorem{theorem}{Theorem}
\newtheorem{corollary}{Corollary}
\newtheorem{lemma}{Lemma}
\theoremstyle{definition}
\DeclareMathOperator{\VOL}{vol}
\newcommand{\R}{\mathbb{R}}
\newcommand{\dd}{\mathop{}\!\mathrm{d}}
\newcommand{\set}[1]{\left\{#1\right\}}
\newcommand{\norm}[1]{\left\Vert#1\right\Vert}
\newcommand{\pd}{\partial}
\newcommand{\uS}{\mathbb{S}^{n-1}}
\newcommand{\MA}{Monge-Amp\`ere }
\newcommand{\OM}{Orlicz-Minkowski }
\newcommand{\beq}{\begin{equation}}
\newcommand{\eeq}{\end{equation}}
\newcommand{\beqs}{\begin{eqnarray*}}
\newcommand{\eeqs}{\end{eqnarray*}}
\newcommand{\beqn}{\begin{eqnarray}}
\newcommand{\eeqn}{\end{eqnarray}}
\begin{document}

\title{A generalized Gauss curvature flow related to the Orlicz-Minkowski problem}

\author{ YanNan Liu \qquad Jian Lu }

\address{YanNan Liu: School of Mathematics and Statistics, Beijing Technology and Business University, Beijing 100048, P.R. China}
\email{liuyn@th.btbu.edu.cn}

\address{Jian Lu: South China Research Center for Applied Mathematics and Interdisciplinary Studies, South China Normal University, Guangzhou 510631, P.R. China}
\email{jianlu@m.scnu.edu.cn}
\email{lj-tshu04@163.com}

\thanks{The authors were supported by Natural Science Foundation of China (11871432). The first author was also supported in part by Beijing Natural Science Foundation (1172005).}

\date{}

\begin{abstract}
In this paper a generalized Gauss curvature flow about a convex hypersurface in the
Euclidean $n$-space is studied. This flow is closely related to the
Orlicz-Minkowski problem, which involves Gauss curvature and a function of
support function.
Under some appropriate assumptions, we prove the long-time existence and
convergence of this flow.
As a byproduct, two existence results of solutions to the even Orlicz-Minkowski
problem are obtained, one of which improves the known result.
\end{abstract}

\keywords{
   \MA equation,
   Orlicz-Minkowski problem,
   Gauss curvature flow,
   Existence of solutions.
}

\subjclass[2010]{35K96, 53C44, 52A20.}

\maketitle
\vskip4ex

\section{Introduction}

Let $M_{0}$ be a smooth, closed, strictly convex hypersurface in the Euclidean
space $\R^{n}$, which encloses the origin and is given by a smooth embedding
$X_{0}: \mathbb{S}^{n-1} \rightarrow \R^{n}$. 
Consider a family of closed hypersurfaces $\set{M_t}$ with $M_t=X(\uS,t)$, where $X:
\mathbb{S}^{n-1}\times[0,T) \rightarrow \R^{n}$ is a smooth map satisfying the
following initial value problem: 
\begin{equation}\label{feq}
  \begin{split}
    \frac{\pd X}{\pd t} (x,t)
    &= -f(\nu)\mathcal{K}(x,t) \frac{\langle X, \nu \rangle}{\varphi(\langle X, \nu \rangle)} \eta(t) \nu +  X,\\
    X(x,0) &= X_{0}(x).
  \end{split}
\end{equation}
Here $f$ is a given positive smooth function on the unit sphere $\uS$, $\nu$ is the unit outer
normal vector of the hypersurface $M_t$ at the point $X(x,t)$, $\mathcal{K}(x,t)$ is the
Gauss curvature of $M_{t}$ at $X(x,t)$, $\langle \cdot,\cdot \rangle$ is the
standard inner product in $\R^n$, $\varphi$ is a positive smooth function
defined in $(0,+\infty)$, $\eta$ is a scalar function to be determined in
order to keep $M_t$ normalized in a certain sense, and $T$ is the maximal time
for which the solution exists.

When $f\equiv1$ and $\varphi(s) = s$, flow \eqref{feq} is a normalized Gauss
curvature flow.
The study of Gauss curvature flow was initiated by Firey \cite{Fir.M.21-1974.1}
for modeling shape change of tumbling stones.
Since then, various isotropic and anisotropic geometric flows involving Gauss
curvature have been extensively studied, see e.g.
\cite{And.IMRN.1997.1001,
AGN.Adv.299-2016.174,
GL.DMJ.75-1994.79,
Ger.CVPDE.49-2014.471,
Ham.CAG.2-1994.155,
Sch.JRAM.600-2006.117,
Sta.IMRNI.2012.2289,
Urb.JDG.33-1991.91}
and references therein.
For isotropic curvature flows, whether the limiting hypersurfaces are spheres or
not is an important issue, see results obtained in e.g.
\cite{And.Invent.138-1999.151, BCD.Acta.219-2017.1, Cho.JDG.22-1985.117}.
For anisotropic curvature flows, the limiting hypersurfaces are usually smooth
solutions to kinds of Minkowski type problems in convex geometry, providing
alternative methods of solving elliptic \MA type equations, see e.g.
\cite{BIS.AP.12-2019.259,
CHZ.MA.373-2019.953,
CL,
CW.AIHPANL.17-2000.733,
Iva.JFA.271-2016.2133, LL,
LSW.JEMSJ.22-2020.893}

The generalized anisotropic Gauss curvature flow \eqref{feq} is closely related
to the \OM problem arising in modern convex geometry.
In fact, by our main Theorem \ref{thm1} below, the support function $h$ of the limiting hypersurface of this
flow provides a smooth solution to the \MA type equation
\begin{equation} \label{OMP-f}
  c\, \varphi(h) \det(\nabla^2h +hI) =f \text{ on } \uS
\end{equation}
for some positive constant $c$.
Here $h$ is a function defined on $\uS$, $\nabla^2h =(\nabla_{ij}h)$ is the
Hessian matrix of covariant derivatives of $h$ with respect to an orthonormal
frame on $\uS$, and $I$ is the unit matrix of order $n-1$.
Equation \eqref{OMP-f} is just the smooth case of \OM problem.

The \OM problem is a basic problem in the Orlicz-Brunn-Minkowski theory in
convex geometry. This theory is the recent development of the classical
Brunn-Minkowski theory, and has attracted great attention from many scholars,
see for example 
\cite{
  GHW.JDG.97-2014.427, GHWY.JMAA.430-2015.810,
  HSX.MA.352-2012.517, HP.Adv.323-2018.114, HH.DCG.48-2012.281,
  HLYZ.Acta.216-2016.325,
  HZ.Adv.332-2018.57,
  HLYZ.DCG.33-2005.699,
  Lud.Adv.224-2010.2346,
  SL.Adv.281-2015.1364, XJL.Adv.260-2014.350,
  ZX.Adv.265-2014.132}
and references therein.
The \OM problem is a generalization of the classical Minkowski problem, and it
asks what are the necessary and sufficient conditions for
a Borel measure on the unit sphere $\uS$ to be a multiple of the Orlicz surface area
measure of a convex body in $\R^n$.
This problem is equivalent to solving equation \eqref{OMP-f} for some support
function $h$ and constant $c$ in smooth case.
When $\varphi(s)=s^{1-p}$, Eq. \eqref{OMP-f} reduces to the $L_p$-Minkowski
problem, which has been extensively studied, see e.g.
\cite{BHZ.IMRNI.2016.1807,
  BLYZ.JAMS.26-2013.831, 
  CW.Adv.205-2006.33,
  HLW.CVPDE.55-2016.117, HLX.Adv.281-2015.906, HLYZ.DCG.33-2005.699,
  JLW.Adv.281-2015.845, 
  JLZ.CVPDE.55-2016.41,
  Lu.SCM.61-2018.511, 
  LW.JDE.254-2013.983, Lut.JDG.38-1993.131,
  Zhu.Adv.262-2014.909, Zhu.IUMJ.66-2017.1333}
and Schneider's book \cite{Schneider.2014}, and corresponding references
therein.
For a general $\varphi$, several existence results have been known, see
\cite{BBC.AiAM.111-2019.101937,
  HLYZ.Adv.224-2010.2485,
  HH.DCG.48-2012.281,
  JL.Adv.344-2019.262}

In this paper we are concerned with the long-time existence and convergence of flow
\eqref{feq} for origin-symmetric convex hypersurfaces.
The special case when $\varphi(s)=s^{1-p}$ with $p>-n$ was first studied by Bryan,
Ivaki and Scheuer \cite{BIS.AP.12-2019.259}, and then by Sheng and Yi \cite{SY}
using a different flow.

In order to study the general case, we need to impose some constraints on
$\varphi$.
Two common assumptions are as follows:
\begin{quote}
{\bf (A)} $\varphi$ is a continuous and positive function defined in
$(0,+\infty)$ such that $\phi(s) = \int^{s}_{0} 1/\varphi(\tau)\dd\tau$ exists
for every $s>0$ and is unbounded as $s \rightarrow +\infty$; Or
\\
{\bf (B)} $\varphi$ is a continuous and positive function defined in
$(0,+\infty)$ such that for every $s>0$, $\phi(s) = \int^{+\infty}_{s}
1/\varphi(\tau)\dd\tau$ exists, and for $s$ near $0$, $\phi(s)\leq N s^p$ for
some positive constant $N$ and some number $p\in(-n,0)$.
\end{quote}
One can easily see that the special case $\varphi(s)=s^{1-p}$ satisfies
{\bf (A)} when $p>0$, and {\bf (B)} when $-n<p<0$.
In fact, these two assumptions were used in 
\cite{BBC.AiAM.111-2019.101937,
  HLYZ.Adv.224-2010.2485,
  HH.DCG.48-2012.281,
  JL.Adv.344-2019.262}
to prove existence results of equation \eqref{OMP-f} by variational methods.

As mentioned above, $\eta(t)$ in \eqref{feq} is used to keep $M_t$ normalized in
a certain sense. In this paper, we find that flow \eqref{feq} will evolve for a
long time if the volume of the convex body bounded by $M_t$ remains unchanged.
This requires $\eta$ to be given by
\begin{equation}\label{eta}
  \eta(t) = \frac{\int_{\uS}\rho(u,t)^n\dd u}{\int_{\uS} f(x)h(x,t)/\varphi(h) \dd x},
\end{equation}
where $\rho(\cdot,t)$ and $h(\cdot,t)$ are the radial function and support
function of the convex hypersurface $M_t$ respectively.
See section 2 for these definitions and computations.
Similar $\eta(t)$ was used by Chen, Huang and Zhao \cite{CHZ.MA.373-2019.953} to
study a geometric flow related to the $L_p$ dual Minkowski problem.

When $f$ is even, namely $f(-x)=f(x)$ for any $x\in\uS$, we obtain the following
long-time existence and convergence of flow \eqref{feq}.

\begin{theorem} \label{thm1}
  Assume $M_{0}$ is a smooth, closed, origin-symmetric, uniformly convex hypersurface in $\R^{n}$.
  If $f$ is a smooth and even function on $\uS$, and $\varphi\in
  C^\infty(0,+\infty)$ satisfies {\bf (A)} or {\bf (B)}, then flow \eqref{feq}
  has a unique smooth solution $M_{t}$ for all time $t > 0$.
  Moreover, when $t\to\infty$, a subsequence of $M_{t}$ converges in $C^{\infty}$ to a smooth,
  closed, origin-symmetric, uniformly convex hypersurface, whose support
  function is a smooth even solution to equation \eqref{OMP-f} for some positive
  constant $c$.
\end{theorem}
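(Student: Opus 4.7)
The plan is to convert \eqref{feq} into a parabolic \MA equation for the support function $h(\cdot,t)$ of $M_t$, to establish time-independent \emph{a priori} estimates using the volume conservation built into \eqref{eta} together with a monotone Orlicz entropy, and finally to extract a smooth subsequential limit that solves \eqref{OMP-f}.

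\textbf{Reduction and conservation.} A direct computation shows that, as long as $M_t$ remains uniformly convex, its support function satisfies the fully nonlinear parabolic \MA equation
\[
  \frac{\pd h}{\pd t} = -\frac{f(x)\,h\,\eta(t)}{\varphi(h)\,\det(\nabla^2 h + hI)} + h, \qquad h(\cdot,0)=h_0,
\]
for which standard parabolic theory gives short-time existence. The choice \eqref{eta} of $\eta$ is designed so that $\VOL(K_t)$ is constant in $t$, as one checks by differentiating the representation $\VOL(K_t)=\tfrac{1}{n}\int_{\uS} h\det(\nabla^2 h+hI)\dd x$ along the flow. Next, the Orlicz entropy $\mathcal{J}(t)=\int_{\uS} f\,\phi(h(\cdot,t))\dd x$ should be monotone (decreasing under \textbf{(A)}, increasing under \textbf{(B)}, since $\phi'=\pm 1/\varphi$). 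Indeed, substituting the flow equation and using $\eta\int fh/\varphi\dd x = n\VOL$ one finds $-\mathcal{J}'(t) = \eta(t)\,D(t)$, where
\[
  D(t) := \int_{\uS}\frac{f^2 h}{\varphi(h)^2\det b}\dd x - \frac{1}{n\VOL}\Bigl(\int_{\uS}\frac{fh}{\varphi(h)}\dd x\Bigr)^2 \ge 0
\]
is the Cauchy-Schwarz deficit relative to the measure $h\det b\,\dd x$, and vanishes exactly at stationary solutions of \eqref{OMP-f}.

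\textbf{\emph{A priori} estimates.} Since $f$ is even and $M_0$ is origin-symmetric, the reflection symmetry $h(x,t)=h(-x,t)$ is preserved by \eqref{feq}. Combined with constant volume, this forces $\max h$ and $\min h$ to be comparable on each time slice. Combining with the uniform bound on $\mathcal{J}$ and the structure of \textbf{(A)}/\textbf{(B)} then produces $0<c_1\le h\le c_2$: case \textbf{(A)} uses unboundedness of $\phi$ at $\infty$ to rule out $h\to\infty$, while case \textbf{(B)} uses the growth $\phi(s)\le Ns^p$ with $p>-n$ near $0$ together with the integral $\int_{\uS}h^n\dd u$ to rule out $h\to 0$. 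Convexity then bounds $|\nabla h|$. The technical heart is the $C^2$ estimate: upper bounds on the principal radii $b_{ij}=\nabla_{ij}h+h\delta_{ij}$ and a lower bound on $\det b$ follow from the maximum principle applied to auxiliary quantities of the form $\log\lambda_{\max}(b)-Ah$ and $\log(-\pd_t h/h)\pm B h$, with the $C^0$ bounds used to control $\varphi$, $\varphi'$ and $\eta(t)$. Once $b$ is two-sided bounded, the equation becomes uniformly parabolic, and Krylov-Evans together with Schauder estimates yield uniform $C^{k,\alpha}$ bounds for every $k$, hence $T=+\infty$.

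\textbf{Convergence.} Monotonicity and boundedness of $\mathcal{J}$ give $\int_0^\infty \eta(t) D(t)\dd t<\infty$, so along some $t_k\to\infty$ one has $D(t_k)\to 0$, which forces $\varphi(h(\cdot,t_k))\det b(\cdot,t_k)/f$ to converge to a constant function on $\uS$. Together with the uniform $C^\infty$ estimates, Arzel\`a-Ascoli extracts a subsequence converging in $C^\infty(\uS)$ to an even, smooth, uniformly convex limit $h_\infty$, while the $C^0$ bounds ensure $\eta(t_k)\to\eta_\infty\in(0,\infty)$; setting $c=1/\eta_\infty$ gives \eqref{OMP-f}. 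The main obstacle will be the $C^2$ estimate: because $h/\varphi(h)$ is not scale-invariant, under the full generality of \textbf{(A)}/\textbf{(B)} one must carefully absorb the logarithmic derivative $(\log\varphi)'(h)$ into the maximum-principle computation using only the $C^0$ bounds, and must keep $\eta(t)$ uniformly bounded away from both $0$ and $\infty$ along the flow.
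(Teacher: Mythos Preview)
Your overall strategy matches the paper's exactly: reduce to the parabolic equation for $h$, use volume conservation and the monotone Orlicz entropy $\mathcal{J}(t)=\int_{\uS} f\phi(h)\dd x$ for $C^0$ bounds, obtain $C^2$ bounds via the maximum principle, apply Krylov--Safonov and Schauder for higher regularity and long-time existence, and deduce convergence from $\mathcal{J}'(t_k)\to 0$ together with the equality case of Cauchy--Schwarz. However, one statement in your $C^0$ argument is wrong: origin-symmetry combined with constant volume does \emph{not} force $\max h$ and $\min h$ to be comparable (a thin origin-symmetric ellipsoid of fixed volume has $\max h/\min h$ arbitrarily large). The paper never claims this. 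Under \textbf{(A)} it uses origin-symmetry only to get $h(x,t)\ge R_t|\langle x,u_t\rangle|$, which with $\mathcal{J}(t)\le\mathcal{J}(0)$ and $\phi(+\infty)=+\infty$ bounds $R_t$ from above; the lower bound then comes from volume via John's ellipsoid. Under \textbf{(B)} the entropy lower bound yields $\int_{\uS}h^p\dd x\ge c>0$, and then $p\in(-n,0)$, evenness and fixed volume finish. You invoke the right ingredients, but the false intermediate claim should be dropped.

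Your proposed auxiliary functions for the $C^2$ estimate also differ from the paper's and, as written, may not close. For the principal-radii upper bound the paper uses $w=\log\lambda_{\max}(b_{ij})-A\log h+B|\nabla h|^2$; the decisive negative term $-2BF\sum_i b_{ii}$ (which dominates when $b_{11}\to\infty$) comes from the $B|\nabla h|^2$ piece via $-2BFb^{ii}h_{ii}^2$, whereas with only $-Ah$ one merely gets $-AFh\sum_i b^{ii}$, which need not be large. For the Gauss-curvature upper bound the paper takes $Q=(F-h)/(h-\varepsilon_0)$ with $0<\varepsilon_0<\min h$; the $\varepsilon_0$-shift is exactly what produces the term $FQ\bigl(1-\tfrac{h}{h-\varepsilon_0}\bigr)\sum_i b^{ii}<0$, which would vanish if the denominator were $h$. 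Your maximum-principle scheme is right in outline, but these particular test functions should be replaced by the paper's (or by variants that generate the same dominant negative terms).
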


The study of flow \eqref{feq} is inspired by 
\cite{BIS.AP.12-2019.259,
CHZ.MA.373-2019.953,
CL, LSW.JEMSJ.22-2020.893}
where various Minkowski type problems were studied via different geometric
flows.
Our paper provides the first example of Gauss curvature flows related to the \OM problem.
As an application, we have
\begin{corollary} \label{corOM}
  Assume $f$ is a smooth and even function on $\uS$.
  If $\varphi$ is a smooth function satisfying {\bf (A)} or {\bf (B)}, then
  there exists a smooth even solution to equation \eqref{OMP-f} for some
  positive constant $c$.
\end{corollary}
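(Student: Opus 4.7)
The plan is to deduce Corollary \ref{corOM} as an immediate consequence of Theorem \ref{thm1}, so the work reduces to supplying admissible initial data and reading off the properties of the limiting hypersurface.

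First, I would select the simplest possible initial hypersurface: let $M_0 = r\mathbb{S}^{n-1}$ be any round sphere centered at the origin (the radius $r>0$ is free; one can choose e.g. $r=1$). This $M_0$ is trivially smooth, closed, origin-symmetric, and uniformly convex, so every hypothesis of Theorem \ref{thm1} is satisfied in view of the given assumptions on $f$ and $\varphi$. Thus flow \eqref{feq} with this initial datum admits a unique smooth solution $M_t$ defined for all $t>0$.

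Next, Theorem \ref{thm1} furnishes a sequence $t_k \to \infty$ such that $M_{t_k} \to M_\infty$ in $C^\infty$, where $M_\infty$ is a smooth, closed, origin-symmetric, uniformly convex hypersurface whose support function $h\in C^\infty(\uS)$ satisfies equation \eqref{OMP-f} for some positive constant $c>0$. Because $M_\infty$ is origin-symmetric, its support function, defined by $h(x) = \max_{y\in M_\infty}\langle x,y\rangle$, automatically satisfies $h(-x) = h(x)$ for all $x\in \uS$; that is, $h$ is even. This $h$ is therefore the desired smooth even solution to \eqref{OMP-f}.

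In other words, there is no genuine obstacle at the level of the corollary itself — the entire analytic content (long-time existence, $C^\infty$ subsequential convergence, preservation of origin-symmetry, and the identification of the limit with a solution of \eqref{OMP-f}) is packaged inside Theorem \ref{thm1}. The only thing to check is the triviality that sphere initial data lies in the admissible class and that the support function of an origin-symmetric convex body is an even function on $\uS$.
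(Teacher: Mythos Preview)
Your proposal is correct and matches the paper's approach: the paper presents Corollary \ref{corOM} simply ``as an application'' of Theorem \ref{thm1} with no separate proof, treating it as immediate once any admissible origin-symmetric initial hypersurface (such as your sphere) is fed into the flow. Your observation that origin-symmetry of $M_\infty$ forces its support function to be even is exactly the trivial verification the paper leaves implicit.
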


The result of Corollary \ref{corOM} with assumption {\bf (A)} was obtained by
Haberl, Lutwak, Yang and Zhang \cite[Theorem 2]{HLYZ.Adv.224-2010.2485} for even measures.
The result of Corollary \ref{corOM} with assumption {\bf (B)} was obtained by
Bianchi, B\"{o}r\"{o}czky and Colesanti \cite{BBC.AiAM.111-2019.101937} for
$L_{\frac{n}{n+p}}$ functions (not necessarily even), where two more assumptions
on $\varphi$ were needed: $\lim_{s\to0^+}\varphi(s)=0$ and $\varphi$ is monotone increasing.

This paper is organized as follows.
In section 2, we give some basic knowledge about the flow \eqref{feq}.
In section 3, the long-time existence of flow \eqref{feq} will be proved.
First, under assumptions {\bf (A)} or {\bf (B)}, we derive uniform positive
upper and lower bounds for support functions of $\set{M_t}$.
Then, the bounds of principal curvatures are derived
via proper auxiliary functions and delicate computations.
So the long-time existence follows by standard arguments.
In section 4, by considering a related geometric functional, we prove that
a subsequence of $\set{M_t}$ converges to a smooth solution to equation
\eqref{OMP-f}, completing the proof of Theorem \ref{thm1}.

\section{Preliminaries}

Let $\R^n$ be the $n$-dimensional Euclidean space, and $\uS$ be the unit sphere
in $\R^n$.
Assume $M$ is a smooth closed uniformly convex hypersurface in $\R^{n}$.
Without loss of generality, we may assume that $M$ encloses the origin.
The support function $h$ of $M$ is defined as
\begin{equation*}
h(x) := \max_{y\in M} \langle y,x \rangle, \quad \forall x\in\uS,
\end{equation*}
where $\langle \cdot,\cdot \rangle$ is the standard inner product in $\R^n$.
And the radial function $\rho$ of $M$ is given by
\begin{equation*}
\rho(u) :=\max\set{\lambda>0 : \lambda u\in M}, \quad\forall u\in\uS.
\end{equation*}
Note that $\rho(u)u\in M$.

Denote the Gauss map of $M$  by $\nu_M$.
Then $M$ can be parametrized by the inverse Gauss map $X :
\mathbb{S}^{n-1}\rightarrow M$ with $X(x) =\nu_M^{-1}(x)$.
The support function $h$ of $M$ can be computed by
\begin{equation} \label{h}
  h(x) = \langle x, X(x)\rangle, \indent x \in \mathbb{S}^{n-1}. 
\end{equation}
Note that $x$ is just the unit outer normal of $M$ at $X(x)$.
Let $e_{ij}$ be the standard metric of the sphere $\mathbb{S}^{n-1}$, and
$\nabla$ be the corresponding connection on $\mathbb{S}^{n-1}$.
Differentiating \eqref{h}, we have
\begin{equation*}
  \nabla_{i} h = \langle \nabla_{i}x, X(x)\rangle + \langle x, \nabla_{i}X(x)\rangle. 
\end{equation*}
  Since $\nabla_{i}X(x)$ is tangent to $M$ at $X(x)$, we have
\begin{equation*}
  \nabla_{i} h = \langle \nabla_{i}x, X(x)\rangle. 
\end{equation*}
It follows that
\begin{equation}\label{Xh}
  X(x) = \nabla h + hx.
\end{equation}

By differentiating \eqref{h} twice, the  second fundamental form $A_{ij}$   of $M$
can be computed in terms of the support function, see for example \cite{Urb.JDG.33-1991.91},
\begin{equation}
\label{A} A_{ij} =  \nabla_{ij}h + he_{ij}, 
\end{equation}
where $\nabla_{ij} = \nabla_{i}\nabla_{j}$ denotes the second order covariant derivative with respect to $e_{ij}$.
The  induced metric matix $g_{ij}$ of $M$ can be derived by Weingarten's formula,
\begin{equation}
  \label{g}
  e_{ij} = \langle \nabla_{i}x, \nabla_{j}x\rangle  = A_{ik}A_{lj}g^{kl}. 
\end{equation}
The principal radii of curvature are the eigenvalues of the matrix $b_{ij} =
A^{ik}g_{jk}$.
When considering a smooth local orthonormal frame on $\uS$, by virtue of
\eqref{A} and \eqref{g}, we have
\begin{equation}
  \label{radii}
  b_{ij} = A_{ij} = \nabla_{ij}h + h\delta_{ij}.
\end{equation}
We will use
$b^{ij}$ to denote the inverse matrix of $b_{ij}$.
The Gauss curvature of $X(x) \in M$ is given by
\begin{equation*}
\mathcal{K}(x) = [\det(\nabla_{ij}h + h\delta_{ij})]^{-1}. 
\end{equation*}

From the evolution equation of $X(x,t)$ in flow \eqref{feq}, we derive the
evolution equation of the corresponding support function $h(x,t)$:
\begin{equation}\label{seq}
 \frac{\pd h}{\pd t} (x,t)= -\eta(t)f(x)\mathcal{K}h/\varphi(h)+ h(x,t), \indent x \in \mathbb{S}^{n-1}.
\end{equation}
Denote the radial function of $M_t$ by $\rho(u,t)$.
From $\eqref{Xh}$, $u$ and $x$ are related by
\begin{equation}
  \label{rs}
  \rho(u)u = \nabla h(x) + h(x)x.
\end{equation}
Let $x = x(u,t)$, by $\eqref{rs}$, we have
\begin{equation*}
\log \rho(u,t) = \log h(x,t) - \log \langle x,u \rangle.
\end{equation*}
Differentiating the above identity, we have
\begin{equation*}
  \begin{split}
    \frac{1}{\rho(u,t)}\frac{\pd \rho(u,t)}{\pd t}
    &= \frac{1}{h(x,t)}\Bigl(\nabla h\cdot \frac{\pd x(u,t)}{\pd t} + \frac{\pd h(x,t)}{\pd t}\Bigr)
    - \frac{u}{\langle x,u \rangle} \cdot \frac{\pd x(u,t)}{\pd t}\\
    &= \frac{1}{h(x,t)}\frac{\pd h(x,t)}{\pd t}
    + \frac{1}{h(x,t)}[\nabla h - \rho(u,t)u]\cdot \frac{\pd x(u,t)}{\pd t}\\
    &= \frac{1}{h(x,t)}\frac{\pd h(x,t)}{\pd t}.
  \end{split}
\end{equation*}
The evolution equation of  radial function then follows from \eqref{seq},
\begin{equation}\label{req}
\frac{\pd \rho}{\pd t} (u,t)= -\eta(t)f(x)\mathcal{K}\rho/\varphi(h) + \rho(u,t),
\end{equation}
where $\mathcal{K}$ denotes the Gauss curvature at $\rho(u,t)u \in M_{t}$ and
$f$ takes value at the unit normal vector $x(u,t)$.

We use $\VOL(t)$ to denote the volume of the convex body bounded by the
hypersurface $M_t$.
From
\begin{equation*}
  \VOL(t)=\frac{1}{n} \int_{\uS} \rho(u,t)^n \dd u,
\end{equation*}
we have the following computations:
\begin{equation*}
\begin{split}
  \pd_t \VOL(t)
  &= \int_{\uS} \rho(u,t)^{n-1}\pd_t\rho \dd u,\\
  &= \int_{\uS} \rho^n \dd u -\eta(t)\int_{\uS} \rho^nf(x)\mathcal{K}/\varphi(h) \dd u,\\
  &= \int_{\uS} \rho^n \dd u -\eta(t)\int_{\uS} f(x)h/\varphi(h) \dd x.
\end{split}
\end{equation*}
If we take $\eta(t)$ as in \eqref{eta}, there is
\begin{equation} \label{eq:1}
\pd_t \VOL(t)\equiv0, 
\end{equation}
namely the volume of the convex body bounded by $M_t$ remains unchanged.

\section{Long-time existence of the flow}

In this section, we will give a priori estimates about support functions and
obtain the long-time existence of flow \eqref{feq} under assumptions of Theorem \ref{thm1}.

In the following of this paper, we always assume that $M_{0}$ is a smooth,
closed, origin-symmetric, uniformly convex hypersurface in $\R^{n}$, $f$ is a
smooth, positive and even function on $\uS$, and $\varphi\in C^\infty(0,+\infty)$
satisfies {\bf (A)} or {\bf (B)}.
$h: \uS\times[0,T)\to \R$ is a smooth solution to the evolution equation \eqref{seq}
with the initial $h(\cdot,0)$ the support function of $M_0$.
Here $T$ is the maximal time for which the solution exists.
Let $M_t$ be the convex hypersurface determined by $h(\cdot,t)$, and
$\rho(\cdot,t)$ be the corresponding radial function.

We first prove the uniform positive upper and lower bounds of $h(\cdot,t)$ for $t\in[0,T)$.

\begin{lemma}\label{lem3.1}
When $\varphi$ satisfies {\bf (A)}. There exists a positive constant $C$
independent of $t$, such that for every $t\in[0,T)$
\begin{equation}\label{rho1}
1/C \leq \rho(\cdot,t) \leq C \text{ on }\uS.
\end{equation}
It means that
\begin{equation}\label{h1}
1/C \leq h(\cdot,t) \leq C \text{ on }\uS.
\end{equation}
\end{lemma}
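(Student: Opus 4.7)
The plan is to exhibit a functional that is non-increasing along the flow and then combine its boundedness with the volume preservation \eqref{eq:1} and origin-symmetry to obtain the pointwise bounds. Let $\phi(s) = \int_0^s 1/\varphi(\tau)\dd\tau$ as in assumption \textbf{(A)}, and consider
$$\mathcal{J}(t) := \int_{\uS} \phi(h(x,t))\,f(x)\dd x.$$
First I will show $\mathcal{J}'(t)\leq 0$. Differentiating under the integral and substituting \eqref{seq},
$$\mathcal{J}'(t) = \int_{\uS}\frac{f}{\varphi(h)}\,\pd_t h\dd x = -\eta(t)\int_{\uS}\frac{f^2\mathcal{K}h}{\varphi(h)^2}\dd x + \int_{\uS}\frac{fh}{\varphi(h)}\dd x.$$
From \eqref{eta} together with $\int_{\uS}\rho^n\dd u = nV$, one has $\eta(t)\int_{\uS} fh/\varphi(h)\dd x = nV$. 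Applying Cauchy--Schwarz to the splitting $fh/\varphi(h) = (f\sqrt{\mathcal{K}h}/\varphi(h))\cdot\sqrt{h/\mathcal{K}}$ and invoking the classical support-function volume identity $\int_{\uS} h/\mathcal{K}\dd x = nV$ yields
$$\left(\int_{\uS}\frac{fh}{\varphi(h)}\dd x\right)^2 \leq nV\int_{\uS}\frac{f^2\mathcal{K}h}{\varphi(h)^2}\dd x,$$
so $\mathcal{J}'(t)\leq 0$ and hence $\mathcal{J}(t)\leq \mathcal{J}(0)<\infty$ on $[0,T)$.

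Next, since $M_0$ is origin-symmetric and $f$ is even, the parity of $h(\cdot,t)$ is preserved by \eqref{seq}, so each $M_t$ is origin-symmetric. Let $R(t):=\max_{x\in\uS} h(x,t)$, attained at some $x_0$. Origin-symmetry places both $\pm R(t)x_0$ in the convex body enclosed by $M_t$, and convexity then forces $h(x,t)\geq R(t)|\langle x,x_0\rangle|$. On the cap $\{x\in\uS:|\langle x,x_0\rangle|\geq 1/2\}$, whose spherical measure $c_n>0$ depends only on $n$,
$$\mathcal{J}(t)\geq (\min_{\uS} f)\cdot c_n\cdot \phi(R(t)/2).$$
Since $\mathcal{J}$ is bounded above and $\phi(s)\to +\infty$ as $s\to\infty$ by \textbf{(A)}, a uniform upper bound $R(t)\leq C$ follows.

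For the matching lower bound, let $r(t):=\min_{x\in\uS} h(x,t)$, attained at some $x_*$. Origin-symmetry confines the enclosed body to the slab $\{y:|\langle y,x_*\rangle|\leq r(t)\}$, and the above upper bound confines it to the ball $B_C$. Comparing volumes with the preserved $V_0$,
$$V_0 = V(t)\leq 2\,r(t)\,\omega_{n-1} C^{n-1},$$
so $r(t)\geq V_0/(2\omega_{n-1}C^{n-1})>0$. Because every origin-symmetric convex body satisfies $\max\rho=\max h$ and $\min\rho=\min h$, both \eqref{rho1} and \eqref{h1} follow. The main obstacle is the monotonicity step: the Cauchy--Schwarz decomposition must match precisely with $\int h/\mathcal{K}\dd x=nV$ to produce the correct sign, otherwise the functional approach fails; the remaining steps are essentially geometric, relying only on origin-symmetry and volume preservation.
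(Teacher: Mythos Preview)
Your proof is correct and follows essentially the same route as the paper: define the functional $\int_{\uS}\phi(h)f\dd x$, show it is non-increasing via the Cauchy--Schwarz (H\"older) inequality applied to the splitting $fh/\varphi(h)=\sqrt{h/\mathcal{K}}\cdot f\sqrt{\mathcal{K}h}/\varphi(h)$ together with $\int_{\uS}h/\mathcal{K}\dd x=\int_{\uS}\rho^n\dd u$, then combine the resulting bound on $\phi(R/2)$ over a spherical cap with the unboundedness of $\phi$ from \textbf{(A)} to cap $R$, and finally use volume preservation for the lower bound. The only cosmetic differences are that the paper phrases the maximal radius via $\max\rho$ rather than $\max h$ (equal here) and invokes the minimal ellipsoid instead of your slab estimate for the lower bound; both arguments are equivalent.
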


\begin{proof}
Let
\begin{equation*}
J(t)=\int_{\uS} \phi(h(x,t)) f(x) \dd x, \quad t\geq 0.
\end{equation*}  
We claim that $J(t)$ is non-increasing.
In fact, recalling \eqref{seq}, we have
\begin{equation*}
  \begin{split}
    J'(t)
    &= \int_{\uS} \phi'(h(x,t)) \pd_th f(x) \dd x \\
    &= \int_{\uS} [-\eta(t)f(x)\mathcal{K}h/\varphi(h)+ h]  f(x) /\varphi(h) \dd x \\
    &= \int_{\uS}  f(x) h /\varphi(h) \dd x 
    -\eta(t) \int_{\uS} f(x)^2\,\mathcal{K}h / \varphi(h)^2 \dd x.
  \end{split}
\end{equation*}
By the definition of $\eta(t)$ in \eqref{eta}, there is 
\begin{equation*}
  \eta(t) = \frac{\int_{\uS} h/\mathcal{K} \dd x}{\int_{\uS} f(x)h/\varphi(h) \dd x}.
\end{equation*}
Hence
\begin{equation}\label{eq:2}
  \begin{split}
    J'&(t) \int_{\uS} f(x)h/\varphi(h) \dd x \\
    &= \left( \int_{\uS}  f(x) h /\varphi(h) \dd x  \right)^2
    - \int_{\uS} h/\mathcal{K} \dd x \cdot \int_{\uS} f(x)^2\,\mathcal{K}h / \varphi(h)^2 \dd x \\
    &= \left( \int_{\uS} \sqrt{h/\mathcal{K}} \cdot f \sqrt{\mathcal{K}h} /\varphi(h) \dd x  \right)^2
    - \int_{\uS} h/\mathcal{K} \dd x \cdot \int_{\uS} f^2\,\mathcal{K}h / \varphi(h)^2 \dd x \\
    &\leq 0,
  \end{split}
\end{equation}
where the last inequality is due to the H\"older's inequality.
Therefore, $J(t)$ is non-increasing.

For each $t$, write
\begin{equation*}
R_t = \max_{u\in\uS} \rho (u,t) = \rho(u_t,t)
\end{equation*}
for some $u_t\in\uS$.
Since $M_{t}$ is origin-symmetric, we have by the definition of support function
that
\begin{equation*}
h(x,t)\geq R_t|\langle x,u_t \rangle|, \quad \forall x\in\uS.
\end{equation*}

Now we have the following estimates:
\begin{equation*}
\begin{split}
  J(0) &\geq J(t) \\
  &\geq f_{\min} \int_{\uS} \phi(h(x,t)) \dd x \\
  &\geq f_{\min} \int_{\uS} \phi(R_t|\langle x,u_t \rangle|) \dd x \\
  &= f_{\min} \int_{\uS} \phi(R_t|x_1|) \dd x.
\end{split}
\end{equation*}
Denote $S_1 =\set{x\in\uS : |x_1|\geq 1/2}$, then
\begin{equation*}
\begin{split}
  J(0) 
  &\geq f_{\min} \int_{\uS} \phi(R_t/2) \dd x \\
  &= f_{\min} \phi(R_t/2) |S_1|,
\end{split}
\end{equation*}
which implies that $\phi(R_t/2)$ is uniformly bounded from above.
By assumption {\bf (A)}, $\phi(s)$ is strictly increasing and tends to $+\infty$
as $s\to+\infty$.
Thus $R_t$ is uniformly bounded from above.

Recalling $\VOL(t)\equiv\VOL(0)$ by \eqref{eq:1}, one can easily obtain the
uniform positive lower bound of $\rho(\cdot,t)$.
In fact, by the concept of minimum ellipsoid of a convex body, there exists a
positive constant $C_n$ depending only on $n$, such that
\begin{equation*}
\VOL(t) \leq C_n R_t^{n-1} \cdot\min_{u\in\uS} \rho(u,t).
\end{equation*}
Thus the uniform positive lower bound of $\rho(\cdot,t)$ follows from their uniform
upper bound.
\end{proof}

\begin{lemma}\label{lem3.2}
When $\varphi$ satisfies {\bf (B)}. There exists a positive constant $C$
independent of $t$, such that for every $t\in[0,T)$
\begin{equation}\label{h2}
1/C \leq h(\cdot,t) \leq C \text{ on }\uS.
\end{equation}
It means that
\begin{equation}\label{rho2}
1/C \leq \rho(\cdot,t) \leq C \text{ on }\uS.
\end{equation}
\end{lemma}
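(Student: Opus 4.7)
The plan is to mirror Lemma~\ref{lem3.1}, with the sign adjustments forced by the fact that under~(B) the function $\phi(s)=\int_s^{+\infty}1/\varphi(\tau)\,d\tau$ is strictly decreasing, with $\phi'(s)=-1/\varphi(s)$. I would first bound $h$ from above via a monotone energy argument, then bound $h$ from below via the Blaschke-Santal\'o inequality; the bounds on $\rho$ then follow immediately from the bounds on $h$ for an origin-symmetric body.

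Introduce $J(t)=\int_{\uS}\phi(h(x,t))f(x)\,dx$. Repeating the computation leading to~\eqref{eq:2} with $\phi'=-1/\varphi$ flips the direction of the H\"older inequality and yields
\[
J'(t)\int_{\uS}\frac{fh}{\varphi(h)}\,dx \;=\; \int_{\uS}\frac{h}{\mathcal{K}}\,dx\cdot\int_{\uS}\frac{f^2\mathcal{K}h}{\varphi(h)^2}\,dx - \left(\int_{\uS}\frac{fh}{\varphi(h)}\,dx\right)^{\!2} \;\geq\; 0,
\]
using $\int_{\uS}h/\mathcal{K}\,dx=n\VOL(M_t)$, which is constant by~\eqref{eq:1}. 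Hence $J(t)\geq J(0)>0$ for all $t$. For the upper bound on $h$, set $R_t=\max_\uS h(\cdot,t)=\max_\uS \rho(\cdot,t)$, attained at some $x_t\in\uS$. Origin-symmetry gives $M_t\supset[-R_t x_t, R_t x_t]$, so $h(x,t)\geq R_t|\langle x,x_t\rangle|$, and since $\phi$ is decreasing,
\[
0 \;<\; J(0) \;\leq\; J(t) \;\leq\; f_{\max}\int_{\uS}\phi\bigl(R_t|\langle x, x_t\rangle|\bigr)\,dx.
\]
I would then show the right-hand side vanishes as $R_t\to\infty$: the non-equatorial region $\{|\langle x,x_t\rangle|\geq s_0/R_t\}$ is handled by $\phi(s)\to 0$ as $s\to+\infty$ together with dominated convergence, while the equatorial band of area $O(1/R_t)$ is handled by the bound $\phi(s)\leq Ns^p$ after an explicit spherical-coordinates computation. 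The resulting contradiction with $J(0)>0$ forces $R_t\leq C$ uniformly.

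With $h\leq C$ the support function is $C$-Lipschitz on $\uS$. The Blaschke-Santal\'o inequality applied to the origin-symmetric body $M_t$, combined with the preserved volume $\VOL(M_t)=\VOL(M_0)$, gives the uniform estimate $\int_{\uS}h^{-n}(x,t)\,dx = n\VOL(M_t^*) \leq n\omega_n^2/\VOL(M_0)$. If $h(x_0,t)=\epsilon$, Lipschitz continuity forces $h\leq 2\epsilon$ on the spherical ball of radius $\epsilon/C$ around $x_0$, whose area is at least $c(\epsilon/C)^{n-1}$; applying Chebyshev's inequality to $h^{-n}$ then yields $\epsilon\geq c'/C^{n-1}$, a uniform positive lower bound on $h$. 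Since $M_t$ is origin-symmetric, $1/C\leq h\leq C$ is equivalent to $B(0,1/C)\subset M_t\subset B(0,C)$ and therefore to $1/C\leq\rho\leq C$.

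The main obstacle is the equatorial-band estimate. The bound $\phi(s)\leq Ns^p$ alone makes $|\langle x,x_t\rangle|^p$ integrable on $\uS$ only when $p\in(-1,0)$; for the remaining range $p\in(-n,-1]$ the crude lower bound $h\geq R_t|\langle x,x_t\rangle|$ has to be sharpened inside the thin strip, for example by supplementing it with the uniform pointwise bound $h(x,t)\geq c\VOL(M_0)/R_t^{n-1}$ coming from John's theorem for the origin-symmetric body $M_t$.
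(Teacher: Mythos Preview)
Your monotonicity step $J'(t)\geq 0$ and the conclusion $J(t)\geq J(0)>0$ match the paper exactly. The divergence is in how the upper bound on $h$ is extracted from this.

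The paper does \emph{not} feed the segment bound $h\geq R_t|\langle x,x_t\rangle|$ back into $J$. Instead it fixes a large level $a$, uses $\lim_{s\to\infty}\phi(s)=0$ to force $\int_{\{h\geq a\}}\phi(h)f\,dx<J(0)/2$, and on $\{h<a\}$ applies $\phi(s)\leq\tilde N s^p$ to obtain the clean integral inequality
\[
\int_{\uS}h(x,t)^p\,dx\;\geq\;\frac{J(0)}{2\tilde N f_{\max}}\,.
\]
From this uniform lower bound on $\int h^p$, together with evenness, the volume constraint and $p\in(-n,0)$, the paper invokes the standard even $L_p$-Minkowski diameter estimate (comparison with the full John ellipsoid of $M_t$, using all $n$ semi-axes) to get both the upper and the lower bound on $h$ at once.

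Your route has a genuine gap in the range $p\in(-n,-n/(n-1)]$, which is non-empty as soon as $n\geq 3$. The fix you propose---supplementing $h\geq R_t|\langle x,x_t\rangle|$ by the scalar bound $h\geq c\,\VOL(M_0)/R_t^{\,n-1}$---is the \emph{worst case} of John's theorem and throws away the intermediate ellipsoid axes. With the combined lower bound $h\geq\max\bigl(R_t|\langle x,x_t\rangle|,\,cR_t^{-(n-1)}\bigr)$ one computes that the equatorial contribution to $\int_{\uS}\phi(h)\,dx$ is of order $R_t^{-(n-1)p-n}$, and this tends to zero only when $p>-n/(n-1)$. Reaching the full range $p>-n$ genuinely requires the multi-axis ellipsoidal comparison, which is exactly what the paper's ``simple computation'' encodes. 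Your Blaschke--Santal\'o argument for the lower bound on $h$, by contrast, is correct once the upper bound is in hand and is a nice alternative to the paper's treatment.
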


\begin{proof}
Let
\begin{equation*}
J(t)=\int_{\uS} \phi(h(x,t)) f(x) \dd x, \quad t\geq 0.
\end{equation*}  
Note that $\phi'(s)=-1/\varphi(s)$. We have
\begin{equation*}
  \begin{split}
    J'(t)
    &= \int_{\uS} \phi'(h(x,t)) \pd_th f(x) \dd x \\
    &= -\int_{\uS} [-\eta(t)f(x)\mathcal{K}h/\varphi(h)+ h]  f(x) /\varphi(h) \dd x \\
    &= \eta(t) \int_{\uS} f(x)^2\,\mathcal{K}h / \varphi(h)^2 \dd x
    -\int_{\uS}  f(x) h /\varphi(h) \dd x \\
    &\geq0,
  \end{split}
\end{equation*}
where the last inequality is due to \eqref{eq:2}.
Hence $J(t)$ is non-decreasing, and 
\begin{equation}\label{eq:3}
  J(t)\geq J(0)>0.
\end{equation}

Let $a$ be a positive number to be determined. Write
\begin{equation*}
S_t =\set{x\in\uS : h(x,t)\geq a}.
\end{equation*}
Then
\begin{equation*}
  \int_{S_t} \phi(h(x,t)) f(x) \dd x
  \leq \int_{S_t} \phi(a) f(x) \dd x
  \leq \phi(a) \norm{f}_{L^1(\uS)}.
\end{equation*}
By $\lim_{s\to+\infty}\phi(s)=0$, one can take a sufficiently large $a$, such
that
\begin{equation*}
  \int_{S_t} \phi(h(x,t)) f(x) \dd x
  <J(0)/2, \quad \forall\, t\in[0,T).
\end{equation*}
Note that $a$ depends only on $f$ and $\varphi$.

Recall assumption {\bf (B)}, when $s$ near $0$, $\phi(s)\leq N s^p$ for
some positive constant $N$ and some number $p\in(-n,0)$.
Since $\phi$ is smooth in $(0,+\infty)$, one can easily see that there exists a
positive number $\tilde{N}$ such that
\begin{equation*}
\phi(s)\leq \tilde{N} s^p, \quad \forall s\in(0,a).
\end{equation*}
Now we can estimate $J(t)$ as follows:
\begin{equation*}
  \begin{split}
    J(t)
    &=\biggl( \int_{\uS\backslash S_t} +\int_{S_t} \biggr) \phi(h(x,t)) f(x) \dd x \\
    &\leq \tilde{N} \int_{\uS\backslash S_t} h(x,t)^p f(x) \dd x +J(0)/2 \\
    &\leq \tilde{N} f_{\max} \int_{\uS} h(x,t)^p \dd x +J(0)/2,
  \end{split}
\end{equation*}
which together with \eqref{eq:3} implies that
\begin{equation*}
  \int_{\uS} h(x,t)^p \dd x
  \geq
  \frac{J(0)}{2\tilde{N} f_{\max}}.
\end{equation*}
Noting $h(\cdot,t)$ is even, $p\in(-n,0)$ and $\VOL(t)\equiv\VOL(0)$, by a
simple computation, one can see that $h(\cdot,t)$ has uniform positive upper and
lower bounds.
\end{proof}

Since $h(\cdot,t)$ is the support function, it is easy to obtain gradient
estimates from the bounds of $h(\cdot,t)$.
In fact, by the equality $\rho^{2} = h^{2} + |\nabla h|^{2}$, we have from the previous
lemmas that

\begin{corollary}\label{cor3.2}
  Under the assumptions of Theorem \ref{thm1}, we have
\begin{equation*}
  |\nabla h(x,t)| \leq C, \quad \forall (x,t) \in \mathbb{S}^{n-1} \times [0, T),
\end{equation*}
where $C$ is a positive constant depending only on constants in Lemmas \ref{lem3.1} and \ref{lem3.2}.
\end{corollary}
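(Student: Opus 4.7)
The plan is immediate once the geometric identity $\rho^2 = h^2 + |\nabla h|^2$ is justified. To establish this identity, I would start from equation \eqref{Xh}, which gives $X(x,t) = \nabla h(x,t) + h(x,t)\,x$. Because $\nabla h$ is the covariant derivative on $\uS$, it lies in the tangent space at $x$, hence is orthogonal to the position vector $x\in\uS$. Taking squared Euclidean norms on both sides yields
\[
|X(x,t)|^2 = |\nabla h(x,t)|^2 + h(x,t)^2.
\]
Since $X(x,t) \in M_t$ is the point with outer normal $x$, and since the point on $M_t$ in a given radial direction $u$ has Euclidean length $\rho(u,t)$, we have $|X(x,t)| = \rho(u,t)$ for $u = X(x,t)/|X(x,t)|$. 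Therefore $\rho(u,t)^2 = h(x,t)^2 + |\nabla h(x,t)|^2$ holds pointwise.

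With this identity in hand, I would simply combine it with the uniform two-sided bounds on $h$ and $\rho$ from Lemmas \ref{lem3.1} and \ref{lem3.2}. Under either hypothesis \textbf{(A)} or \textbf{(B)}, there is a constant $C>0$ independent of $t$ such that $\rho(\cdot,t) \leq C$ on $\uS$ for all $t\in[0,T)$. Then
\[
|\nabla h(x,t)|^2 = \rho(u,t)^2 - h(x,t)^2 \leq \rho(u,t)^2 \leq C^2,
\]
which is exactly the claimed estimate (after relabeling the constant).

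There is no real obstacle here: the corollary is a one-line consequence of Lemmas \ref{lem3.1}--\ref{lem3.2} together with the elementary identity $\rho^2=h^2+|\nabla h|^2$. The only point deserving mention is the orthogonality $\nabla h\perp x$, which is why the Pythagorean split goes through and no cross term appears.
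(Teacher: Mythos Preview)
Your argument is correct and matches the paper's approach exactly: the paper states the identity $\rho^2 = h^2 + |\nabla h|^2$ immediately before the corollary and deduces the gradient bound from the uniform bounds on $h$ and $\rho$ in Lemmas \ref{lem3.1} and \ref{lem3.2}. You simply supply the justification for that identity via \eqref{Xh} and the orthogonality $\nabla h \perp x$, which the paper leaves implicit.
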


To obtain the long-time existence of the flow \eqref{feq} or \eqref{seq}, we
further need to establish uniform upper and lower bounds for the principal
curvature.

In the rest of this section, we take a local orthonormal frame $\{e_{1},
\cdots, e_{n-1}\}$ on $\mathbb{S}^{n-1} $ such that the standard metric on
$\mathbb{S}^{n-1} $ is $\{\delta_{ij}\}$. 
Double indices always mean to sum from $1$ to $n-1$.
For convenience, we also write
\begin{gather*}
\psi=1/\varphi, \\
F = \eta(t)f(x)\mathcal{K}(x)h \psi(h).
\end{gather*}
By Lemmas \ref{lem3.1} and \ref{lem3.2}, for any $t\in[0,T)$, $h(\cdot,t)$
 always ranges within a bounded interval $I'=[1/C,C]$, where $C$ is the
 constant in these two lemmas.

We first derive the upper bound for the Gaussian curvature.

\begin{lemma}\label{lem3.3}
  Under the assumptions of Theorem \ref{thm1}, we have 
\begin{equation*}
\mathcal{K}(x,t) \leq C, \quad \forall (x,t)\in \uS\times[0,T), 
\end{equation*}
where $C$ is a positive constant independent of $t$.
\end{lemma}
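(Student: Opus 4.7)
Since the normalization $\eta(t)=\int_{\uS}\rho^{n}\,du\,/\,\int_{\uS}fh/\varphi(h)\,dx$ lies between two positive constants (the numerator equals the constant $n\VOL(0)$ by \eqref{eq:1}, and the denominator is two-sidedly bounded by the uniform bounds of $h$ from Lemmas \ref{lem3.1}--\ref{lem3.2} together with the smoothness and positivity of $\varphi$), and since $fh\psi(h)$ is likewise bounded above and below by positive constants on the range of $h$, bounding $\mathcal{K}$ is equivalent to bounding
\[
\tilde F(x,t):=f(x)\mathcal{K}(x,t)h(x,t)\psi(h(x,t))=F/\eta(t).
\]
I would set $\varepsilon_{0}:=\tfrac{1}{2}\inf_{\uS\times[0,T)}h>0$ and show $\sup\tilde Q\leq C$ with $C$ independent of $t$, where
\[
\tilde Q(x,t):=\frac{\tilde F(x,t)}{h(x,t)-\varepsilon_{0}}.
\]

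\textbf{Maximum principle and evolution.} For any $T^{*}\in(0,T)$, let $(x_{0},t_{0})$ attain $\max_{\uS\times[0,T^{*}]}\tilde Q$; we may assume $t_{0}>0$, else the bound follows from the initial datum. Choosing a local orthonormal frame in which $b_{ij}$ is diagonal at $(x_{0},t_{0})$, the conditions $\nabla\tilde Q=0$ and $b^{ij}\nabla_{ij}\tilde Q\leq 0$, together with the identity $\nabla_{ij}h=b_{ij}-h\delta_{ij}$, give
\[
\nabla_{i}\tilde F=\tilde Q\,\nabla_{i}h,\qquad b^{ij}\nabla_{ij}\tilde F\leq \frac{\tilde F\bigl[(n-1)-h\sum_{i}b^{ii}\bigr]}{h-\varepsilon_{0}}.
\]
On the other hand, from $\partial_{t}h=h-\eta\tilde F$ we obtain $\partial_{t}b_{ij}=b_{ij}-\eta\nabla_{ij}\tilde F-\eta\tilde F\delta_{ij}$, so $\partial_{t}\log\mathcal{K}=-b^{ij}\partial_{t}b_{ij}$ and the chain rule produce
\[
\frac{\partial_{t}\tilde F}{\tilde F}=-(n-1)+\eta\,b^{ij}\nabla_{ij}\tilde F+\eta\tilde F\sum_{i}b^{ii}+(h-\eta\tilde F)\Bigl[\frac{1}{h}+\frac{\psi'(h)}{\psi(h)}\Bigr].
\]

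\textbf{Key inequality and conclusion.} Plugging the Hessian inequality into this evolution and using $\partial_{t}\tilde Q\geq 0$, equivalently $\partial_{t}\log\tilde F\geq(h-\eta\tilde F)/(h-\varepsilon_{0})$, a short algebraic rearrangement (noting $1/h-1/(h-\varepsilon_{0})=-\varepsilon_{0}/(h(h-\varepsilon_{0}))$) yields
\[
\eta\,\tilde F\,\varepsilon_{0}\,\frac{\sum_{i}b^{ii}}{h-\varepsilon_{0}}\leq C_{1}+C_{2}\tilde F,
\]
where $C_{1},C_{2}$ depend only on $n$, $f$, the bounds on $h$, and the uniform bounds of $\varphi,\varphi'$ on the compact interval $[\inf h,\sup h]$. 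By the arithmetic--geometric mean inequality,
\[
\sum_{i}b^{ii}\geq(n-1)\bigl(\det b^{ii}\bigr)^{1/(n-1)}=(n-1)\mathcal{K}^{1/(n-1)}\geq c\,\tilde F^{1/(n-1)},
\]
so the left-hand side grows like $\tilde F^{n/(n-1)}$ while the right-hand side is linear in $\tilde F$. Since $n/(n-1)>1$, this forces $\tilde F(x_{0},t_{0})\leq C$, hence $\tilde Q\leq C$ throughout $\uS\times[0,T)$, giving the claimed uniform upper bound on $\mathcal{K}$.

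\textbf{Main obstacle.} The principal technical difficulty is the term $\psi'(h)/\psi(h)$, which has no a priori sign under assumption (A) or (B) alone. This is resolved by the uniform two-sided bounds on $h$ from Lemmas \ref{lem3.1}--\ref{lem3.2}: since $h$ takes values in a fixed compact subset of $(0,\infty)$ on which $\varphi\in C^{\infty}$ is strictly positive, $\psi'/\psi$ is automatically bounded. Two further delicate points are worth flagging. First, the shift $\varepsilon_{0}>0$ is essential --- it is the surviving favorably-signed term $\varepsilon_{0}\sum_{i}b^{ii}/(h-\varepsilon_{0})$ after the cancellations that supplies the superlinear growth in $\tilde F$; with $\varepsilon_{0}=0$ this term disappears and the argument collapses. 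Second, working with $\tilde F=F/\eta$ rather than $F$ itself sidesteps any need to control $\eta'(t)$ pointwise, which would otherwise require an awkward estimate of $\int_{\uS}F\,dx$.
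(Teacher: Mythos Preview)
Your proposal is correct and follows essentially the same approach as the paper: both bound the Gauss curvature via a maximum-principle argument applied to a quotient of the form $(\text{speed})/(h-\varepsilon_{0})$, with the $\varepsilon_{0}$-shift producing the dominant term $\varepsilon_{0}\,\eta\tilde F\sum_{i}b^{ii}/(h-\varepsilon_{0})$ that grows like $\tilde F^{n/(n-1)}$ and forces the bound. The one technical difference is that the paper uses $Q=(F-h)/(h-\varepsilon_{0})$ with $F=\eta\tilde F$ and must therefore also estimate $\eta'(t)$ (it bounds $\partial_{t}(h\psi(h)\eta)\le C_{4}Q$); your choice to factor out $\eta$ and work with $\tilde Q=\tilde F/(h-\varepsilon_{0})$ cleanly avoids that estimate, exactly as you note in your final remark.
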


\begin{proof}
  Consider the following auxiliary function:
  \beqs Q(x,t)  = \frac{1}{h- \varepsilon _{0}}(F - h) = \frac{-h_{t}}{h- \varepsilon _{0}},\eeqs
where $ \varepsilon _{0}$ is a positive constant satisfying
\beqs \varepsilon_{0} < \min_{\mathbb{S}^{n-1} \times [0, T)} h(x,t).\eeqs
Recalling that $F =\eta(t) f(x)\mathcal{K}(x)h\psi(h)$ and that $h$ has uniform
positive upper and lower bounds, the upper bound of $\mathcal{K}(x,t) $  follows from that
of $ Q(x,t)$.
Hence we only need to derive the upper bound of $ Q(x,t)$.

First we   compute
the evolution equation of  $ Q(x,t)$. Note that
\begin{equation*}
\nabla_{i} Q
= \frac{F_{i} - h_{i}}{h- \varepsilon _{0}} - \frac{F-h}{(h- \varepsilon _{0})^{2}}h_{i}, 
\end{equation*}
and
\begin{equation*}
\begin{split}
 \nabla_{ij} Q
&= \frac{F_{ij} -h_{ij}  }{h- \varepsilon _{0}} - \frac{(F_{i} - h_{i}) h_{j}}{(h- \varepsilon _{0})^{2}} - \frac{(F_{j} - h_{j})h_{i} + (F-h)h_{ij}}{(h- \varepsilon _{0})^{2}} + 2\frac{(F-h)h_{i}h_{j}}{(h- \varepsilon _{0})^{3}}\\
& = \frac{F_{ij} -h_{ij}  }{h- \varepsilon _{0}} - \frac{(F-h)h_{ij}}{(h- \varepsilon _{0})^{2}} - \frac{\nabla_{i}Q h_{j}}{h- \varepsilon _{0}}
- \frac{\nabla_{j}Q h_{i}}{h- \varepsilon _{0}}.
\end{split}
\end{equation*}
There is also that
\begin{equation*}
\frac{\pd Q}{\pd t} = \frac{F_{t} - h_{t}}{h- \varepsilon _{0}}  + \frac{h_{t}^{2}}{(h- \varepsilon _{0})^{2}}= \frac{F_{t} }{h- \varepsilon _{0}} + Q + Q^{2}.
\end{equation*}
We obtain the
 evolution equation of $ Q(x,t)$:
\begin{equation*}
\begin{split}
\frac{\pd Q}{\pd t} - Fb^{ij} \nabla_{ij} Q
&=  \frac{1}{h- \varepsilon _{0}}(F_{t} -Fb^{ij}F_{ij}) + Q + Q^{2}  \\
&\hskip1.1em + (Q+1)\frac{Fb^{ij}h_{ij}}{h- \varepsilon _{0}} +  \frac{ \nabla_{i} Q F{b^{ij}h_{j}}}{h- \varepsilon _{0}} +  \frac{ \nabla_{j} Q F{b^{ij}h_{i}}}{h- \varepsilon _{0}}.
\end{split}
\end{equation*}

Now we need to compute the evolution equation of $F$.
From the fact
\begin{equation}
\label{dK}\frac{\pd \mathcal{K}}{\pd b_{ij}} = -\mathcal{K}b^{ij}, 
\end{equation}
we have
\begin{equation*}
\begin{split}
f(x)h\psi(h)\eta(t)\frac{\pd \mathcal{K} }{\pd t} 
&= -Fb^{ij}(h_{ij} + \delta_{ij}h)_{t}  \\
&= -Fb^{ij}(h _{t})_{ij} - Fb^{ij}\delta_{ij}h_{t} \\
&= -Fb^{ij}(-F + h)_{ij}  - Fb^{ij}\delta_{ij}h_{t} \\
&= Fb^{ij}F_{ij} - Fb^{ij}b_{ij} +  F^{2}b^{ij}\delta_{ij}.
\end{split}
\end{equation*}
Then there is
\begin{equation*}
\begin{split}
F_{t}
&= f(x)h\psi(h)\eta(t)\frac{\pd \mathcal{K} }{\pd t} +  \mathcal{K}(x,t)f(x)\frac{\pd }{\pd t}(h\psi(h)\eta(t))\\
&= Fb^{ij}F_{ij} - Fb^{ij}b_{ij} +  F^{2}b^{ij}\delta_{ij} + \mathcal{K}(x,t)f(x)\frac{\pd }{\pd t}(h\psi(h)\eta(t)).
\end{split}
\end{equation*}
Thus we obtain
\beqs \frac{\pd F}{\pd t}  -Fb^{ij}\nabla_{ij}F= - F(n-1) +  F^{2}b^{ij}\delta_{ij}+ \mathcal{K}(x,t)f(x)\frac{\pd }{\pd t}(h\psi(h)\eta(t)).\eeqs

At a spatial maximal point of $Q(x,t)$, if we take an orthonormal frame such that $b_{ij}$ is diagonal, we have
\begin{equation*}
\begin{split}
 \frac{\pd Q}{\pd t}  & -  b^{ii}F\nabla_{ii}Q \\
&\leq   \frac{1}{h- \varepsilon _{0}}(F_{t} - b^{ii}F\nabla_{ii}F) + Q + Q^{2}
+  \frac{Fb^{ii}h_{ii}}{h- \varepsilon _{0}} +  \frac{QF{b^{ii}h_{ii}}}{h- \varepsilon _{0}}\\
& =  \frac{1}{h- \varepsilon _{0}}[- Fb^{ii}b_{ii}
+ F^{2}b^{ii}\delta_{ii}+ \mathcal{K}(x,t)f(x)\frac{\pd }{\pd t}(h\psi(h)\eta(t))]\\
&\hskip1.1em  +  Q + Q^{2} +  \frac{Fb^{ii}(b_{ii} - h\delta_{ii})}{h- \varepsilon _{0}} +  \frac{QF{b^{ii}(b_{ii} - h\delta_{ii})}}{h- \varepsilon _{0}}\\
& =  \frac{F^{2}}{h- \varepsilon _{0}}\sum_{i}{b^{ii}}+ Q + Q^{2} + \frac{1}{h- \varepsilon _{0}}\mathcal{K}(x,t)f(x)\frac{\pd }{\pd t}(h\psi(h)\eta(t))\\
&\hskip1.1em - \frac{hF}{h- \varepsilon _{0}}\sum_{i}{b^{ii}} + \frac{QF(n-1)}{h- \varepsilon _{0}}  - \frac{QFh}{h- \varepsilon _{0}}\sum_{i}{b^{ii}}\\
& \leq  FQ\Bigl(1- \frac{h}{h- \varepsilon _{0}}\Bigr)\sum_{i}{b^{ii}} + C_{1}Q +  C_{2} Q^{2}  \\
&\hskip1.1em + \frac{1}{h- \varepsilon _{0}}\mathcal{K}(x,t)f(x)\frac{\pd }{\pd t}(h\psi(h)\eta(t)).
\end{split}
\end{equation*}

Since
\begin{equation*}
  \begin{split}
    \frac{\pd \eta(t)}{\pd t}
    &= - \frac{\int_{\uS}\rho^{n}\dd u }{[\int_{\uS}h\psi(h)f(x)\dd x]^2 }
    \int_{\uS} f(x)[\psi'(h)h + \psi(h)]h_{t}  \dd x \\
    &\leq C_3 Q,
  \end{split}
\end{equation*}
where $C_3$ is a positive constant depending on $\norm{f}_{C(\uS)}$, $\|\varphi\|_{C^1(I')}$ and
$\|h\|_{C^{1}(\mathbb{S}^{n-1} \times [0, T))}$.
Therefore we have
\begin{equation*}
\begin{split}
\frac{\pd }{\pd t}(h\psi(h)\eta(t))
&=h\psi(h) \frac{\pd \eta(t)}{\pd t} + [\psi'(h)h + \psi(h)]\eta(t)h_{t}\\
&\leq  C_4 Q.
\end{split}
\end{equation*}
For $Q$ large enough, there is
$$ 1/C_0 \, \mathcal{K}\leq Q \leq C_{0}\mathcal{K},$$
and
\beqs \sum_{i}{b^{ii}} \geq (n-1) \mathcal{K}^{\frac{1}{n-1}}.\eeqs
Hence, for large $Q$, we obtain
\beqs \frac{\pd Q}{\pd t}\leq C_{1}Q^{2} (C_{2} - \varepsilon _{0}Q^{\frac{1}{n-1}}) < 0.\eeqs
Then the upper bound of $\mathcal{K}(x,t)$ follows.
\end{proof}

Now we can estimate lower bounds of principal curvatures $\kappa_{i}(x,t)$ of $M_t$
for $i=1,\cdots, n-1$.

\begin{lemma}\label{lem3.4}
  Under the assumptions of Theorem \ref{thm1}, we have
  \begin{equation*}
  \kappa_{i} \geq C, \quad \forall (x,t)\in\uS\times[0,T),
  \end{equation*}
  where $C$ is a positive constant independent of $t$.
\end{lemma}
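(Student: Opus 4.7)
\medskip\noindent\emph{Plan of proof.} Since the principal curvatures $\kappa_i$ are the eigenvalues of the inverse matrix $(b^{ij})$ while the principal radii of curvature are the eigenvalues of $(b_{ij})$, a uniform positive lower bound on each $\kappa_i$ is equivalent to a uniform upper bound on the largest eigenvalue $\lambda_{\max}(b_{ij})$. Combined with the lower bound $\det(b_{ij}) = 1/\mathcal{K}\geq 1/C$ from Lemma~\ref{lem3.3}, an upper bound on $\lambda_{\max}(b_{ij})$ confines the entire spectrum of $(b_{ij})$ to a compact subset of $(0,+\infty)$, and hence every $\kappa_i$ to a compact subset of $(0,+\infty)$ as well. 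The whole task therefore reduces to bounding $\lambda_{\max}(b_{ij})$ from above uniformly on $\uS \times [0,T)$.

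My plan is to apply the parabolic maximum principle to an auxiliary function of the form
$$
  W(x,t) \;=\; \log \lambda_{\max}(b_{ij}(x,t)) \;+\; \frac{A}{2}|\nabla h(x,t)|^2 \;-\; B\, h(x,t),
$$
where $A,B>0$ are constants to be chosen at the end of the argument in terms of the $C^1$-norms of $f$ on $\uS$ and of $\varphi$ on the interval $I'=[1/C,C]$ provided by Lemmas~\ref{lem3.1}--\ref{lem3.2}. At a spatial maximum $(x_0,t_0)$ of $W$ with $t_0>0$ (if $t_0=0$ the bound follows from the initial data), I would rotate the orthonormal frame at $x_0$ so that $b_{ij}(x_0,t_0)$ is diagonal with $b_{11}(x_0,t_0)=\lambda_{\max}$, and reduce to studying the smooth quantity $\tilde W = \log b_{11} + \frac{A}{2}|\nabla h|^2 - Bh$, which locally lies below $W$ and coincides with it at $(x_0,t_0)$, hence also attains a local maximum there. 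Differentiating $\partial_t h = -F + h$ (with $F=\eta f\mathcal{K}h\psi(h)$) twice gives $\partial_t b_{11}= -F_{11} + b_{11} - F$; using the Codazzi identity $\nabla_k b_{ij}=\nabla_i b_{kj}$ on $\uS$ together with the standard Ricci commutation of fourth derivatives on the sphere, I can rewrite $Fb^{ii}\nabla_{ii}b_{11}$ in terms of $Fb^{ii}\nabla_{11}b_{ii}$ modulo zeroth-order corrections. The $\nabla^3 h$ terms in $F_{11}$ (arising from $\nabla^2\mathcal{K}$ via $\mathcal{K}=1/\det b$ and $\partial b^{kl}/\partial b_{mn}=-b^{km}b^{nl}$) then cancel against those in $Fb^{ii}\nabla_{11}b_{ii}$.

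Applying the critical-point relations $\nabla_i\tilde W=0$ to eliminate $\nabla_i b_{11}/b_{11}$ in favor of $\nabla h$, and $\nabla^2\tilde W\leq 0$, the remaining first-order contributions coming from $\nabla f$, $\varphi'(h)$, and $\eta'(t)$ are all estimated in terms of $b_{11}$ and absorbed into the negative terms produced by the $\frac{A}{2}|\nabla h|^2$ and $-Bh$ pieces of $\tilde W$: differentiating $-Bh$ along the flow yields $-BhF\sum_i b^{ii}$ (after using $h_{ii}=b_{ii}-h$ on the diagonal), while differentiating $\frac{A}{2}|\nabla h|^2$ yields a term dominated by $-AF\,b^{ii}h_{ki}h_{ki}$. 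The time derivative $\eta'(t)$ is controlled exactly as in the proof of Lemma~\ref{lem3.3}, via $\partial_t h=-F+h$ and the already established bounds on $h,\nabla h, f,\varphi$. Choosing $A$ and $B$ large enough relative to the fixed error constants, I would arrive at a differential inequality at $(x_0,t_0)$ whose only unsigned positive term is linear in $b_{11}$, dominated by the good negative contribution which grows faster (using the AM-GM estimate $\sum_i b^{ii}\geq (n-1)\mathcal{K}^{1/(n-1)}$); this forces $b_{11}(x_0,t_0)\leq C'$ for some uniform $C'$, and hence $\lambda_{\max}(b_{ij})\leq C'$ throughout $\uS\times[0,T)$.

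The main obstacle is the careful bookkeeping of the numerous derivative terms in the evolution inequality for $\tilde W$. The cancellation of the $\nabla^3 h$ terms is dictated by the Monge-Amp\`ere structure of the equation and is essentially forced, but verifying that the net first- and zeroth-order remainders from the anisotropy $f(x)$, the nonlinearity $h\psi(h)$, and the normalization $\eta(t)$ all have the right sign (or can be absorbed by making $A,B$ large compared to the $C^1$ bounds of $f$ and $\varphi$ on the compact range $I'$) requires a delicate and lengthy computation. Once this $C^2$ estimate is secured, the lower bound on $\kappa_i$ follows from $\lambda_{\max}(b_{ij})\leq C'$ and $\det(b_{ij})\geq 1/C$ as noted in the opening paragraph.
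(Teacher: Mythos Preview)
Your strategy is essentially the paper's: apply the maximum principle to an auxiliary function of the form $\log\lambda_{\max}(b_{ij})$ plus lower-order terms, and show that the parabolic operator applied to it is negative when $b_{11}$ is large. The paper uses $w=\log b_{11}-A\log h+B|\nabla h|^2$ rather than your $\log b_{11}+\tfrac{A}{2}|\nabla h|^2-Bh$, but either choice of the ``support function'' term works---both produce a negative multiple of $F\sum_i b^{ii}$ which, with the right relation between $A$ and $B$, absorbs the positive $\sum_i b^{ii}$ contribution coming from the gradient-squared term.

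Two points in your final mechanism are muddled and worth correcting before you flesh out the computation. First, the dominating \emph{good} term is the one you yourself identified from $|\nabla h|^2$, namely $-AF\,b^{ii}h_{ii}^2$; on the diagonal this equals $-AF\sum_i(b_{ii}-h)^2/b_{ii}$, which contributes $-AF\,b_{11}$ (and more) for large $b_{11}$. After you choose $B$ so that the coefficient of $\sum_i b^{ii}$ is nonpositive, the remaining positive terms are all \emph{bounded} (they involve $b^{11}$, not $b_{11}$), so there is no ``positive term linear in $b_{11}$'' to beat---the linear negative term simply forces $b_{11}$ to be bounded. The AM--GM inequality $\sum_i b^{ii}\ge (n-1)\mathcal K^{1/(n-1)}$ that you cite belongs to the proof of Lemma~\ref{lem3.3} and plays no role here. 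Second, $\eta'(t)$ never enters this argument: in the evolution of $b_{11}$ you only need spatial derivatives $M_k$ and $M_{11}$ of $M=\log(\eta f h\psi(h))$, and $\eta$ is spatially constant, so it drops out.
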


\begin{proof}
Consider the auxiliary function
$$w (x,t) = \log \lambda_{\max}(b_{ij}) - A\log h + B|\nabla h|^{2},$$
where $A,B$ are constants to be determined. $\lambda_{\max} $ is the maximal eigenvalue of $b_{ij}$.

For any fixed $t$, we assume that $\max\limits_{\mathbb{S}^{n-1}}w (x,t)$ is
attained at $q \in \mathbb{S}^{n-1}$. At $q $, we take an orthogonal frame such that
$b_{ij}(q,t)$ is diagonal and $\lambda_{\max} (q,t) = b_{11}(q,t)$.
Now we can write $w (x,t)$ as
$$w (x,t) = \log b_{11} - A\log h + B|\nabla h|^{2}.$$

We first compute the evolution equation of $w$. Note that
\begin{align*}
  \frac{\pd \log{b_{11}}}{\pd t} - b^{ii}F\nabla_{ii}\log{b_{11}}
  &= b^{11}\Bigl(\frac{\pd b_{11}}{\pd t}  - Fb^{ii}\nabla_{ii}b_{11} \Bigr) + Fb^{ii}(b^{11})^{2}(\nabla_{i}b_{11})^{2}, \\
  \frac{\pd \log{h}}{\pd t} - b^{ii}F\nabla_{ii}\log{h}
  &= \frac{1}{h}\Bigl(\frac{\pd h}{\pd t}  - Fb^{ii}\nabla_{ii}h \Bigr) +  \frac{Fb^{ii}h_{i}^{2}}{h^{2}}, \\
  \frac{\pd |\nabla h|^{2}}{\pd t} - b^{ii}F\nabla_{ii}|\nabla h|^{2}
  &= 2h_{k}\Bigl(\frac{\pd h_{k}}{\pd t}  - Fb^{ii}\nabla_{ii}h_{k} \Bigr) - 2Fb^{ii}h_{ii}^{2},
\end{align*}
we have
\begin{equation}\label{eq:4}
  \begin{split}
    \frac{\pd w}{\pd t}  -  Fb^{ii}\nabla_{ii} w
    &= b^{11}\Bigl(\frac{\pd b_{11}}{\pd t}  - Fb^{ii}\nabla_{ii}b_{11} \Bigr)  + Fb^{ii}(b^{11})^{2}(\nabla_{i}b_{11})^{2} \\
    &\hskip1.1em - \frac{A}{h}\Bigl(\frac{\pd h}{\pd t}
    - Fb^{ii}\nabla_{ii}h \Bigr) -   \frac{Fb^{ii}h_{i}^{2}}{h^{2}} \\
    &\hskip1.1em + 2Bh_{k}\Bigl(\frac{\pd h_{k}}{\pd t}  - Fb^{ii}\nabla_{ii}h_{k} \Bigr) - 2BFb^{ii}h_{ii}^{2}.
  \end{split}
\end{equation}

Let
\begin{equation*}
M = \log [hf(x)\psi(h)\eta(t)],
\end{equation*}
then $$\log F =  \log \mathcal{K} + M.$$
Differentiating the above equation, we have by \eqref{dK} that
\begin{equation*}
\frac{\nabla_{k}F}{F}
= \frac{1}{\mathcal{K}}\frac{\pd \mathcal{K}}{\pd b_{ij}} \nabla_{k}b_{ij} +  \nabla_{k}M
= -b^{ij}\nabla_{k}b_{ij} +  M_{k},
\end{equation*}
and
\begin{equation*}
\frac{\nabla_{kl}F}{F}  - \frac{\nabla_{k}F\nabla_{l}F}{F^{2}}
= -b^{ij}\nabla_{kl}b_{ij} + b^{ii}b^{jj}\nabla_{k}b_{ij}\nabla_{l}b_{ij} +  \nabla_{lk}M.
\end{equation*}
Recalling the evolution equation of $h$, we have
\begin{equation}\label{eq:7}
\begin{split}
\frac{\pd h}{\pd t}  - Fb^{ii}\nabla_{ii}h &= -F + h - Fb^{ii}(b_{ii} - \delta_{ii}h)\\
&=-Fn + h + Fh \sum_{i}b^{ii},
\end{split}
\end{equation}
and
\begin{equation}\label{eq:8}
\begin{split}
\frac{\pd h_{k}}{\pd t} - Fb^{ii}\nabla_{ii}h_{k}
&= -F_{k} + h_{k} - Fb^{ii} \nabla_{k}b_{ii} + Fh_{k}\sum_{i}b^{ii}\\
&= -M_{k}F + h_{k} + Fh_{k}\sum_{i}b^{ii}.
\end{split}
\end{equation}
We also have
\begin{equation*}
  \begin{split}
    \frac{\pd h_{kl}}{\pd t}
    & = - \nabla_{kl}F + h_{kl} \\
    &= - \frac{\nabla_{k}F\nabla_{l}F}{F}  + Fb^{ij}\nabla_{kl}b_{ij} - Fb^{ii}b^{jj}\nabla_{k}b_{ij}\nabla_{l}b_{ij} - F \nabla_{lk}M +h_{kl}.
  \end{split}
\end{equation*}
By the Gauss equation, see e.g. \cite{Urb.JDG.33-1991.91} for details,
 \beqs\nabla_{kl} h_{ij} = \nabla_{ij} h_{kl} + 2\delta_{kl}h_{ij} - 2\delta_{ij}h_{kl} + \delta_{kj}h_{il} - \delta_{li}h_{kj},\eeqs
or \beqs \nabla_{kl}b_{ij } = \nabla_{ij} b_{kl} + \delta_{kl}h_{ij} - \delta_{ij}h_{kl} + \delta_{kj}h_{il} - \delta_{li}h_{kj}.\eeqs
Then
\begin{equation*}
  \begin{split}
    \frac{\pd h_{kl}}{\pd t}
&= Fb^{ij}\nabla_{ij}h_{kl} + 2\delta_{kl}Fb^{ij}h_{ij} -  Fb^{ij}\delta_{ij}h_{kl} + Fb^{ik}h_{il} - Fb^{jl}h_{kj}\\
&\hskip1.1em - \frac{\nabla_{k}F\nabla_{l}F}{F}    - Fb^{ii}b^{jj}\nabla_{k}b_{ij}\nabla_{l}b_{ij} - F \nabla_{lk}M +h_{kl}.
\end{split}
\end{equation*}
Hence
\begin{equation*}
\begin{split} \frac{\pd b_{kl}}{\pd t}
&= Fb^{ij}\nabla_{ij}b_{kl} + \delta_{kl}Fb^{ij}(b_{ij} -h\delta_{ij} )-  Fb^{ij}\delta_{ij}(b_{kl}- h\delta_{kl})\\
&\hskip1.1em + Fb^{ik}h_{il} - Fb^{jl}h_{kj} + b_{kl} - h\delta_{kl} + (-F + h)\delta_{kl}   \\
&\hskip1.1em- \frac{\nabla_{k}F\nabla_{l}F}{F}- Fb^{ii}b^{jj}\nabla_{k}b_{ij}\nabla_{l}b_{ij} - F \nabla_{lk}M \\
&= Fb^{ij}\nabla_{ij}b_{kl} + \delta_{kl}F(n-2) - Fb^{ij}\delta_{ij}b_{kl}
+ Fb^{ik}h_{il} - Fb^{jl}h_{kj} + b_{kl} \\
&\hskip1.1em- \frac{\nabla_{k}F\nabla_{l}F}{F}   - Fb^{ii}b^{jj}\nabla_{k}b_{ij}\nabla_{l}b_{ij} - F \nabla_{lk}M.
\end{split}
\end{equation*}
When $k=l=1$, we  have
\begin{equation}\label{eq:6}
  \begin{split}
    \frac{\pd b_{11}}{\pd t}
&= Fb^{ii}\nabla_{ii}(b_{11}) + F(n-2) - F\sum_{i}b^{ii}b_{11} + b_{11}\\
&\hskip1.1em - \frac{F_{1}^{2}}{F}   - Fb^{ii}b^{jj}\nabla_{1}(b_{ij})^{2} - F M_{11} .
\end{split}
\end{equation}

Inserting \eqref{eq:7}, \eqref{eq:8} and \eqref{eq:6} into \eqref{eq:4}, we obtain
\begin{equation*}
  \begin{split}
    \frac{\pd w}{\pd t}
    & -  Fb^{ii}\nabla_{ii} w \\
    &\leq b^{11}F(n-2) + 1 - A - b^{11}F M_{11} + \frac{AFn}{h} - AF\sum_{i}b^{ii}  -2BFh_{k}M_{k}   \\
    &\hskip1.1em + 2B|\nabla h|^{2} + 2BF|\nabla h|^{2}\sum_{i}b^{ii} - 2BF \sum_{i}b_{ii} + 4BFh(n-1)\\
    &= b^{11}F(n-2) - b^{11}F M_{11}-2BFh_{k}M_{k} - (A-2B|\nabla h|^{2})F\sum_{i}b^{ii}\\
    &\hskip1.1em -(A -1- 2B |\nabla h|^{2})  - 2BF \sum_{i}b_{ii} + 4BFh(n-1)+ \frac{AFn}{h}.
  \end{split}
\end{equation*}
If we let $A$ satisfy $$ A \geq 2B \max_{\mathbb{S}^{n-1} \times [0, T)}{|\nabla h|^{2}} + 1,$$
then
\begin{equation}\label{eq:5}
  \begin{split}
    \frac{\pd w}{\pd t}  -  Fb^{ii}\nabla_{ii} w
    &\leq b^{11}F(n-2) - b^{11}F M_{11}-2BFh_{k}M_{k} \\
    &\hskip1.1em - 2BF \sum_{i}b_{ii} + 4BFh(n-1)+ \frac{AFn}{h}.
  \end{split}
\end{equation}

Now, we estimate $  - b^{11}F M_{11}-2BFh_{k}M_{k}. $
Since
\begin{equation*}
  \begin{split}
    M &= \log [hf(x)\psi(h)\eta(t)] \\
    & = \log {f(x)} + \log {h(x)} + \log{\psi(h)} + \log {\eta(t)},
  \end{split}
\end{equation*}
then
\begin{equation*}
    \nabla_{k}M
    = \frac{f_{k}}{f} + \frac{h_{k}}{h} + \frac{\psi'}{\psi}h_{k},
\end{equation*}
and
\begin{equation*}
    \nabla_{11}M
    = \frac{f_{11}}{f} - \frac{f_{1}^{2}}{f^{2}} + \frac{h_{11}}{h} - \frac{h_{1}^{2}}{h^{2}}
    +  \frac{\psi''h_{1}^{2} + \psi'h_{11}}{\psi} - \frac{(\psi')^{2}h_{1}^{2}}{\psi^{2}}.
\end{equation*}
Therefore, we obtain
\begin{equation*}
  \begin{split}
    -2Bh_{k}M_{k}
&= -2Bh_{k}\Bigl(\frac{f_{k}}{f} + \frac{h_{k}}{h} + \frac{\psi'}{\psi}h_{k}\Bigr)\\
&\leq 2B\Bigl(\frac{|\nabla f||\nabla h|}{f} +\frac{|\nabla h|^2}{h} + |\nabla h|^{2}\frac{|\psi'|}{\psi}  \Bigr) \\
&\leq  c_{1}B,
\end{split}
\end{equation*}
where $c_{1}$ is a positive constant depending on upper and lower bounds of $f$,
$\varphi(h)$ and $h$, and upper bounds of their first order derivatives.
We also have
\begin{equation*}
  \begin{split}
    - b^{11} M_{11}
    &=  - b^{11} \Bigl(\frac{f_{11}}{f} - \frac{f_{1}^{2}}{f^{2}}  - \frac{h_{1}^{2}}{h^{2}}  +  \frac{\psi''h_{1}^{2} }{\psi} - \frac{(\psi')^{2}h_{1}^{2}}{\psi^{2}} \Bigr)
    + b^{11}\frac{\psi'(b_{11} - h)}{\psi}- b^{11}\frac{b_{11}-h}{h}\\
&\leq  c_{2}b^{11} + c_{3},
\end{split}
\end{equation*}
where  $c_{2}$, $c_3$ are positive constants depending  on $\|\varphi\|_{C^2(I')}$,
$\|f\|_{C^{2}(\uS)}$, $\|h\|_{C^1(\mathbb{S}^{n-1} \times [0, T))}$, and
lower bounds of $\varphi(h)$, $f$ and $h$.
Thus, we have proved that
\begin{equation*}
 - b^{11}F M_{11}-2BFh_{k}M_{k}
\leq  F(c_{1}B  + c_{2}b^{11} + c_{3} ).
\end{equation*}

Now, from \eqref{eq:5} we have
\begin{equation*}
\frac{\pd w}{\pd t}  -  Fb^{ii}\nabla_{ii} w
\leq
F(c_{1}B  + c_{2}b^{11} + c_{3} )  - 2BF \sum_{i}b_{ii} + 4BFh(n-1)+ \frac{AFn}{h}.
\end{equation*}
If we take $B=1$,
then for $b_{ii}$ large enough, there is
\begin{equation*}
  \begin{split}
    \frac{\pd w}{\pd t}  -  Fb^{ii}\nabla_{ii} w
    &\leq F(c_{1}  + c_{2}b^{11} + c_{3}  )  -2F \sum_{i}b_{ii} + 4Fh(n-1)+ \frac{AFn}{h} \\
    &< 0,
  \end{split}
\end{equation*}
which implies that
\beqs \frac{\pd w}{\pd t}  < 0. \eeqs
Therefore $w$ has a uniform upper bound, and so does $\lambda_{\max}(b_{ij})$.
The conclusion of this lemma then follows.
\end{proof}

Combining Lemma \ref{lem3.3} and Lemma \ref{lem3.4}, we see that
the principal curvatures of $M_{t}$ has uniform positive upper and lower bounds.
This together with Lemmas \ref{lem3.1} and \ref{lem3.2} implies that the
evolution equation \eqref{seq} is uniformly parabolic on any finite time
interval. Thus, the result of \cite{KS.IANSSM.44-1980.161} and the standard
parabolic theory show that the smooth solution of \eqref{seq} exists for all
time.
And by these estimates again, a subsequence of $M_t$ converges in $C^\infty$ to
a positive, smooth, uniformly convex hypersurface $M_\infty$ in $\R^n$.
Now to complete the proof of Theorem \ref{thm1}, it remains to check the support
function of $M_\infty$ satisfies Eq. \eqref{OMP-f}.

\section{Convergence of the flow}

In this section, we will complete the proof of Theorem \ref{thm1}.
Let $\tilde{h}$ be the support function of $M_\infty$. We need to prove that
$\tilde{h}$ is a solution to the following equation
\begin{equation} \label{OMP-f1}
  c\, \varphi(h) \det(\nabla^2h +hI) =f \text{ on } \uS
\end{equation}
for some positive constant $c$.

As before, we define the functional
\begin{equation*}
J(t)=\int_{\uS} \phi(h(x,t)) f(x) \dd x, \quad t\geq 0.
\end{equation*}  
By the assumptions on $\phi$, and Lemmas \ref{lem3.1} and \ref{lem3.2}, there
exists a positive constant $C$ which is independent of $t$, such that 
\begin{equation} \label{eq4.1}
  J(t) \leq C, \quad \forall t\geq0.
\end{equation}
We also note that, by proofs of these two lemmas, $J(t)$ is non-increasing when
$\varphi$ satisfies {\bf (A)}, and non-decreasing when $\varphi$ satisfies {\bf (B)}.

We now begin the proof with the assumption {\bf (A)}.
Recalling $J'(t)\leq0$ for any $t>0$.
From
\begin{equation*}
\int_0^t [-J'(t)] \dd t =J(0)-J(t) \leq J(0),
\end{equation*}
we have
\begin{equation*}
\int_0^\infty [-J'(t)] \dd t  \leq J(0),
\end{equation*}
This implies that there exists a subsequence of times $t_j\to\infty$ such that
\begin{equation*}
-J'(t_j) \to 0 \text{ as } t_j\to\infty.
\end{equation*}
Recalling \eqref{eq:2}:
\begin{equation*}
  \begin{split}
    J'&(t_j) \int_{\uS} f(x)h/\varphi(h) \dd x \\
    &= \left( \int_{\uS} \sqrt{h/\mathcal{K}} \cdot f \sqrt{\mathcal{K}h} /\varphi(h) \dd x  \right)^2
    - \int_{\uS} h/\mathcal{K} \dd x \cdot \int_{\uS} f^2\,\mathcal{K}h / \varphi(h)^2 \dd x.
  \end{split}
\end{equation*}
Since $h$ and $\mathcal{K}$ have uniform positive upper and lower bounds, by
passing to the limit, we obtain
\begin{equation*}
  \left( \int_{\uS} \sqrt{\tilde{h}/\widetilde{\mathcal{K}}} \cdot f \sqrt{\widetilde{\mathcal{K}}\tilde{h}} /\varphi(\tilde{h}) \dd x  \right)^2
  = \int_{\uS} \tilde{h}/\widetilde{\mathcal{K}} \dd x \cdot \int_{\uS} f^2\widetilde{\mathcal{K}}\tilde{h} / \varphi(\tilde{h})^2 \dd x,
  \end{equation*}
where $\widetilde{\mathcal{K}}$ is the Gauss curvature of $M_\infty$.
By the equality condition for the H\"older's inequality, there exists a constant
$c\geq0$ such that
\begin{equation*}
  c^2\, \tilde{h}/\widetilde{\mathcal{K}}
  = f^2\widetilde{\mathcal{K}}\tilde{h} / \varphi(\tilde{h})^2 \text{ on }\uS,
\end{equation*}
namely
\begin{equation*}
  c\, \varphi(\tilde{h})/\widetilde{\mathcal{K}} =f \text{ on }\uS,
\end{equation*}
which is just equation \eqref{OMP-f1}.
Note $\tilde{h}$ and $\widetilde{\mathcal{K}}$ have positive upper and lower
bounds, $c$ should be positive.

For the proof with the assumption {\bf (B)}.
Recalling $J'(t)\geq0$ for any $t>0$.
By estimate \eqref{eq4.1}, 
\begin{equation*}
\int_0^t J'(t) \dd t =J(t)-J(0) \leq J(t) \leq C,
\end{equation*}
which leads to
\begin{equation*}
\int_0^\infty J'(t) \dd t  \leq C.
\end{equation*}
This implies that there exists a subsequence of times $t_j\to\infty$ such that
\begin{equation*}
J'(t_j) \to 0 \text{ as } t_j\to\infty.
\end{equation*}
Now using almost the same arguments as above, one can prove $\tilde{h}$ solves
Eq. \eqref{OMP-f1} for some positive constant $c$.
The proof of Theorem \ref{thm1} is completed.


\begin{thebibliography}{10}

\bibitem{And.IMRN.1997.1001}
{\sc B.~Andrews}, {\em Monotone quantities and unique limits for evolving
  convex hypersurfaces}, Internat. Math. Res. Notices,  (1997), pp.~1001--1031.

\bibitem{And.Invent.138-1999.151}
\leavevmode\vrule height 2pt depth -1.6pt width 23pt, {\em Gauss curvature
  flow: the fate of the rolling stones}, Invent. Math., 138 (1999),
  pp.~151--161.

\bibitem{AGN.Adv.299-2016.174}
{\sc B.~Andrews, P.~Guan, and L.~Ni}, {\em Flow by powers of the {G}auss
  curvature}, Adv. Math., 299 (2016), pp.~174--201.

\bibitem{BBC.AiAM.111-2019.101937}
{\sc G.~Bianchi, K.~J. B\"{o}r\"{o}czky, and A.~Colesanti}, {\em The {O}rlicz
  version of the {$L_p$} {M}inkowski problem for {$-n< p<0$}}, Adv. in Appl.
  Math., 111 (2019), p.~101937.

\bibitem{BHZ.IMRNI.2016.1807}
{\sc K.~J. B\"or\"oczky, P.~Heged{\H u}s, and G.~Zhu}, {\em On the discrete
  logarithmic {M}inkowski problem}, Int. Math. Res. Not. IMRN,  (2016),
  pp.~1807--1838.

\bibitem{BLYZ.JAMS.26-2013.831}
{\sc K.~J. B{\"o}r{\"o}czky, E.~Lutwak, D.~Yang, and G.~Zhang}, {\em The
  logarithmic {M}inkowski problem}, J. Amer. Math. Soc., 26 (2013),
  pp.~831--852.

\bibitem{BCD.Acta.219-2017.1}
{\sc S.~Brendle, K.~Choi, and P.~Daskalopoulos}, {\em Asymptotic behavior of
  flows by powers of the {G}aussian curvature}, Acta Math., 219 (2017),
  pp.~1--16.

\bibitem{BIS.AP.12-2019.259}
{\sc P.~Bryan, M.~N. Ivaki, and J.~Scheuer}, {\em A unified flow approach to
  smooth, even {$L_p$}-{M}inkowski problems}, Anal. PDE, 12 (2019),
  pp.~259--280.

\bibitem{CHZ.MA.373-2019.953}
{\sc C.~Chen, Y.~Huang, and Y.~Zhao}, {\em Smooth solutions to the {$L_p$} dual
  {M}inkowski problem}, Math. Ann., 373 (2019), pp.~953--976.

\bibitem{CL}
{\sc H.~Chen and Q.-R. Li}, {\em The {$L_p$} dual {M}inkowski problem and
  related parabolic flows}.
\newblock Preprint.

\bibitem{CW.AIHPANL.17-2000.733}
{\sc K.-S. Chou and X.-J. Wang}, {\em A logarithmic {G}auss curvature flow and
  the {M}inkowski problem}, Ann. Inst. H. Poincar\'e Anal. Non Lin\'eaire, 17
  (2000), pp.~733--751.

\bibitem{CW.Adv.205-2006.33}
\leavevmode\vrule height 2pt depth -1.6pt width 23pt, {\em The
  {$L_p$}-{M}inkowski problem and the {M}inkowski problem in centroaffine
  geometry}, Adv. Math., 205 (2006), pp.~33--83.

\bibitem{Cho.JDG.22-1985.117}
{\sc B.~Chow}, {\em Deforming convex hypersurfaces by the {$n$}th root of the
  {G}aussian curvature}, J. Differential Geom., 22 (1985), pp.~117--138.

\bibitem{Fir.M.21-1974.1}
{\sc W.~J. Firey}, {\em Shapes of worn stones}, Mathematika, 21 (1974),
  pp.~1--11.

\bibitem{GL.DMJ.75-1994.79}
{\sc M.~E. Gage and Y.~Li}, {\em Evolving plane curves by curvature in relative
  geometries. {II}}, Duke Math. J., 75 (1994), pp.~79--98.

\bibitem{GHW.JDG.97-2014.427}
{\sc R.~J. Gardner, D.~Hug, and W.~Weil}, {\em The {O}rlicz-{B}runn-{M}inkowski
  theory: a general framework, additions, and inequalities}, J. Differential
  Geom., 97 (2014), pp.~427--476.

\bibitem{GHWY.JMAA.430-2015.810}
{\sc R.~J. Gardner, D.~Hug, W.~Weil, and D.~Ye}, {\em The dual
  {O}rlicz-{B}runn-{M}inkowski theory}, J. Math. Anal. Appl., 430 (2015),
  pp.~810--829.

\bibitem{Ger.CVPDE.49-2014.471}
{\sc C.~Gerhardt}, {\em Non-scale-invariant inverse curvature flows in
  {E}uclidean space}, Calc. Var. Partial Differential Equations, 49 (2014),
  pp.~471--489.

\bibitem{HLYZ.Adv.224-2010.2485}
{\sc C.~Haberl, E.~Lutwak, D.~Yang, and G.~Zhang}, {\em The even {O}rlicz
  {M}inkowski problem}, Adv. Math., 224 (2010), pp.~2485--2510.

\bibitem{HSX.MA.352-2012.517}
{\sc C.~Haberl, F.~E. Schuster, and J.~Xiao}, {\em An asymmetric affine
  {P}\'olya-{S}zeg\"o principle}, Math. Ann., 352 (2012), pp.~517--542.

\bibitem{Ham.CAG.2-1994.155}
{\sc R.~S. Hamilton}, {\em Remarks on the entropy and {H}arnack estimates for
  the {G}auss curvature flow}, Comm. Anal. Geom., 2 (1994), pp.~155--165.

\bibitem{HLW.CVPDE.55-2016.117}
{\sc Y.~He, Q.-R. Li, and X.-J. Wang}, {\em Multiple solutions of the
  {$L_p$}-{M}inkowski problem}, Calc. Var. Partial Differential Equations, 55
  (2016), pp.~Art. 117, 13 pp.

\bibitem{HP.Adv.323-2018.114}
{\sc M.~Henk and H.~Pollehn}, {\em Necessary subspace concentration conditions
  for the even dual {M}inkowski problem}, Adv. Math., 323 (2018), pp.~114--141.

\bibitem{HH.DCG.48-2012.281}
{\sc Q.~Huang and B.~He}, {\em On the {O}rlicz {M}inkowski problem for
  polytopes}, Discrete Comput. Geom., 48 (2012), pp.~281--297.

\bibitem{HLX.Adv.281-2015.906}
{\sc Y.~Huang, J.~Liu, and L.~Xu}, {\em On the uniqueness of
  {$L_p$}-{M}inkowski problems: the constant {$p$}-curvature case in
  {$\Bbb{R}^3$}}, Adv. Math., 281 (2015), pp.~906--927.

\bibitem{HLYZ.Acta.216-2016.325}
{\sc Y.~Huang, E.~Lutwak, D.~Yang, and G.~Zhang}, {\em Geometric measures in
  the dual {B}runn-{M}inkowski theory and their associated {M}inkowski
  problems}, Acta Math., 216 (2016), pp.~325--388.

\bibitem{HZ.Adv.332-2018.57}
{\sc Y.~Huang and Y.~Zhao}, {\em On the {$L_p$} dual {M}inkowski problem}, Adv.
  Math., 332 (2018), pp.~57--84.

\bibitem{HLYZ.DCG.33-2005.699}
{\sc D.~Hug, E.~Lutwak, D.~Yang, and G.~Zhang}, {\em On the {$L_p$} {M}inkowski
  problem for polytopes}, Discrete Comput. Geom., 33 (2005), pp.~699--715.

\bibitem{Iva.JFA.271-2016.2133}
{\sc M.~N. Ivaki}, {\em Deforming a hypersurface by {G}auss curvature and
  support function}, J. Funct. Anal., 271 (2016), pp.~2133--2165.

\bibitem{JL.Adv.344-2019.262}
{\sc H.~Jian and J.~Lu}, {\em Existence of solutions to the
  {O}rlicz-{M}inkowski problem}, Adv. Math., 344 (2019), pp.~262--288.

\bibitem{JLW.Adv.281-2015.845}
{\sc H.~Jian, J.~Lu, and X.-J. Wang}, {\em Nonuniqueness of solutions to the
  {$L_p$}-{M}inkowski problem}, Adv. Math., 281 (2015), pp.~845--856.

\bibitem{JLZ.CVPDE.55-2016.41}
{\sc H.~Jian, J.~Lu, and G.~Zhu}, {\em Mirror symmetric solutions to the
  centro-affine {M}inkowski problem}, Calc. Var. Partial Differential
  Equations, 55 (2016), pp.~Art. 41, 22 pp.

\bibitem{KS.IANSSM.44-1980.161}
{\sc N.~V. Krylov and M.~V. Safonov}, {\em A property of the solutions of
  parabolic equations with measurable coefficients}, Izv. Akad. Nauk SSSR Ser.
  Mat., 44 (1980), pp.~161--175, 239.

\bibitem{LSW.JEMSJ.22-2020.893}
{\sc Q.-R. Li, W.~Sheng, and X.-J. Wang}, {\em Flow by {G}auss curvature to the
  {A}leksandrov and dual {M}inkowski problems}, J. Eur. Math. Soc. (JEMS), 22
  (2020), pp.~893--923.

\bibitem{LL}
{\sc Y.~Liu and J.~Lu}, {\em A flow method for the dual {O}rlicz-{M}inkowski
  problem}.
\newblock Accepted by Trans. Amer. Math. Soc., arXiv:2001.08862.

\bibitem{Lu.SCM.61-2018.511}
{\sc J.~Lu}, {\em Nonexistence of maximizers for the functional of the
  centroaffine {M}inkowski problem}, Sci. China Math., 61 (2018), pp.~511--516.

\bibitem{LW.JDE.254-2013.983}
{\sc J.~Lu and X.-J. Wang}, {\em Rotationally symmetric solutions to the
  {$L_p$}-{M}inkowski problem}, J. Differential Equations, 254 (2013),
  pp.~983--1005.

\bibitem{Lud.Adv.224-2010.2346}
{\sc M.~Ludwig}, {\em General affine surface areas}, Adv. Math., 224 (2010),
  pp.~2346--2360.

\bibitem{Lut.JDG.38-1993.131}
{\sc E.~Lutwak}, {\em The {B}runn-{M}inkowski-{F}irey theory. {I}. {M}ixed
  volumes and the {M}inkowski problem}, J. Differential Geom., 38 (1993),
  pp.~131--150.

\bibitem{Schneider.2014}
{\sc R.~Schneider}, {\em Convex bodies: the {B}runn-{M}inkowski theory},
  vol.~151 of Encyclopedia of Mathematics and its Applications, Cambridge
  University Press, Cambridge, expanded~ed., 2014.

\bibitem{Sch.JRAM.600-2006.117}
{\sc O.~C. Schn\"{u}rer}, {\em Surfaces expanding by the inverse {G}au\ss
  curvature flow}, J. Reine Angew. Math., 600 (2006), pp.~117--134.

\bibitem{SY}
{\sc W.~Sheng and C.~Yi}, {\em An anisotropic shrinking flow and {$L_p$}
  {M}inkowski problem}.
\newblock arXiv:1905.04679.

\bibitem{Sta.IMRNI.2012.2289}
{\sc A.~Stancu}, {\em Centro-affine invariants for smooth convex bodies}, Int.
  Math. Res. Not. IMRN,  (2012), pp.~2289--2320.

\bibitem{SL.Adv.281-2015.1364}
{\sc Y.~Sun and Y.~Long}, {\em The planar {O}rlicz {M}inkowski problem in the
  {$L^1$}-sense}, Adv. Math., 281 (2015), pp.~1364--1383.

\bibitem{Urb.JDG.33-1991.91}
{\sc J.~Urbas}, {\em An expansion of convex hypersurfaces}, J. Differential
  Geom., 33 (1991), pp.~91--125.

\bibitem{XJL.Adv.260-2014.350}
{\sc D.~Xi, H.~Jin, and G.~Leng}, {\em The {O}rlicz {B}runn-{M}inkowski
  inequality}, Adv. Math., 260 (2014), pp.~350--374.

\bibitem{Zhu.Adv.262-2014.909}
{\sc G.~Zhu}, {\em The logarithmic {M}inkowski problem for polytopes}, Adv.
  Math., 262 (2014), pp.~909--931.

\bibitem{Zhu.IUMJ.66-2017.1333}
\leavevmode\vrule height 2pt depth -1.6pt width 23pt, {\em The {$L_p$}
  {M}inkowski problem for polytopes for {$p<0$}}, Indiana Univ. Math. J., 66
  (2017), pp.~1333--1350.

\bibitem{ZX.Adv.265-2014.132}
{\sc D.~Zou and G.~Xiong}, {\em Orlicz-{J}ohn ellipsoids}, Adv. Math., 265
  (2014), pp.~132--168.

\end{thebibliography}

\end{document}